\documentclass{amsart}

\usepackage{amsmath}
\usepackage{amsthm}
\usepackage{amssymb}
\usepackage{amsfonts}
\usepackage{enumitem} 
\usepackage[bookmarks=true,hyperindex,pdftex,colorlinks,citecolor=blue]{hyperref}
\usepackage{bbm}
\usepackage[all]{xy}
\usepackage{dsfont}




\DeclareMathOperator{\lspan}{span}
\DeclareMathOperator{\supp}{supp}
\DeclareMathOperator{\rad}{rad}
\DeclareMathOperator{\diam}{diam}
\DeclareMathOperator{\Lip}{Lip}
\DeclareMathOperator{\lip}{lip}

\newcommand{\N}{\mathbb{N}}             
\newcommand{\Natural}{\mathbb N}        
\newcommand{\R}{\mathbb{R}}             
\newcommand{\Real}{\mathbb R}           

\newcommand{\abs}[1]{\left|{#1}\right|}                     
\newcommand{\set}[1]{\left\{{#1}\right\}}                   
\newcommand{\norm}[1]{\left\|{#1}\right\|}                  
\newcommand{\ball}[1]{B_{{#1}}}                             

\newcommand{\sphere}[1]{S_{{#1}}}                           
\newcommand{\duality}[1]{\left<{#1}\right>}                 

\newcommand{\wconv}{\stackrel{w}{\rightarrow}}              

\newcommand{\lipfree}[1]{\mathcal{F}({#1})}                 
\newcommand{\F}[1]{\mathcal{F}(#1)}                                
\newcommand{\Free}{\mathcal{F}}                             
\newcommand{\lipnorm}[1]{\norm{#1}_L}                       

\newcommand{\restricted}{\mathord{\upharpoonright}}

\def\<{\langle}
\def\>{\rangle}
\newcommand{\ep}{\varepsilon}


\theoremstyle{plain}
\newtheorem{theorem}{Theorem}[section]
\newtheorem{lemma}[theorem]{Lemma}
\newtheorem{corollary}[theorem]{Corollary}
\newtheorem{proposition}[theorem]{Proposition}

\newtheorem*{claim*}{Claim}

\theoremstyle{definition}
\newtheorem*{definition*}{Definition}
\newtheorem{definition}[theorem]{Definition}
\newtheorem{example}[theorem]{Example}

\begin{document}
\title{Compact reduction in Lipschitz free spaces}

\author[R. J. Aliaga]{Ram\'on J. Aliaga}
\address[R. J. Aliaga]{Universitat Polit\`ecnica de Val\`encia, Camino de Vera S/N, 46022 Valencia, Spain} 
\email{raalva@upvnet.upv.es}

\author[C. No\^us]{Camille No\^us}
\address[C. No\^us]{Laboratoire Cogitamus} 
\email{camille.nous@cogitamus.fr}

\author[C. Petitjean]{Colin Petitjean}
\address[C. Petitjean]{LAMA, Univ Gustave Eiffel, UPEM, Univ Paris Est Creteil, CNRS, F--77447, Marne-la-Vall\'ee, France} 
\email{colin.petitjean@univ-eiffel.fr}

\author[A. Proch\'azka]{Anton\'in Proch\'azka}
\address[A. Proch\'azka]{Laboratoire de Math\'ematiques de Besan\c con  UMR 6623,
Universit\'e Bourgogne Franche-Comt\'e, CNRS, F-25000, Besan\c con, France} 
\email{antonin.prochazka@univ-fcomte.fr}

\begin{abstract}
We prove a general principle satisfied by weakly precompact sets of Lipschitz-free spaces.
By this principle, 
certain infinite dimensional phenomena in Lipschitz-free spaces over general metric spaces may be reduced to the same phenomena in free spaces over their compact subsets.
As easy consequences we derive several new and some known results.
The main new results are: $\Free(X)$ is weakly sequentially complete for every superreflexive Banach space $X$, and $\Free(M)$ has the Schur property and the approximation property for every scattered complete metric space $M$.
\end{abstract}

\subjclass[2010]{Primary 46B20; Secondary 54E50}
\keywords{Lipschitz-free space, Lipschitz function, Lipschitz lifting property, Schur property, approximation property, weak sequential completeness, Dunford-Pettis property}

\maketitle

\section{Introduction}

For a metric space $M$ with a distinguished base point $0 \in M$ (commonly called a \emph{pointed metric space}), the \emph{Lipschitz free space} (for brevity just \emph{free space} in the sequel) $\Free(M)$ is the norm-closed linear span of the evaluation functionals, i.e. of the set $\set{\delta(x): x \in M}$ in the space $\Lip_0(M)^*$ where $\delta(x)\colon f\mapsto f(x)$.
Here the Banach space
$$
\Lip_0(M)=\set{f \in \Real^M: f \mbox{ Lipschitz}, f(0)=0}
$$
is equipped with the norm \[\lipnorm{f}:=\sup\set{\frac{f(x)-f(y)}{d(x,y)}:x \neq y}.\]
It is well known that $\Free(M)$ is an isometric predual of $\Lip_0(M)$.
Another main feature of the free spaces is that every Lipschitz map $f:M \to N$ which fixes the zero induces a linear map $\hat{f}:\Free(M)\to \Free(N)$ such that $\delta_N \circ f=\hat{f}\circ \delta_M$ and $\|\hat{f}\|=\norm{f}_L$.
For a quick proof of these facts and some other basic properties we refer the reader to the paper \cite{CDW_2016}.

The study of isomorphic and isometric properties of free spaces has been a very active research area recently where many deep theorems have been proved but many basic questions are left hopelessly open. 
In this paper we focus on isomorphic properties of free spaces. 
Our starting point is an innocent looking lemma about weakly null sequences in free spaces in the fundamental paper by Nigel Kalton~\cite{Kalton04}. 
In Nigel's words: ``weakly-null sequences [in $\Free(M)$] are almost supported on `small' sets [of $M$]''.
By `small', it is meant unions of finite collections of small-radius balls in the metric space $M$; for the precise statement see Definition~\ref{def:KaltonProperty}.
Here we observe that, when $M$ is complete, this lemma can be bootstrapped to obtain a more user-friendly conclusion: weakly-null sequences in $\Free(M)$ are almost supported on compact subsets of $M$. 
In fact, this conclusion, which we call \emph{tightness} (see Definition~\ref{def:tightness}), is not only true for weakly-null sequences but also for all so called \emph{weakly precompact} sets (see Definition~\ref{def:WeakPrecompact}).
In light of (a bit more advanced version of) this principle, a number of intriguing questions obtain straightforward answers.

Thus, we have several ``compact reduction'' results: $\Free(M)$ is weakly sequentially complete (WSC), resp. Schur, resp. $\ell_1$-saturated, if and only if $\Free(K)$ is for every compact subset $K$ of $M$. 
Similar results are obtained for the approximation property and the Dunford-Pettis property.
Also if a Banach space $X$ does not contain a copy of $\ell_1$, then $\Free(M)$ contains a copy of $X$ if and only if $\Free(K)$ contains a copy of $X$ for some compact $K \subset M$.
Shortly before the original announcement of our results, a preprint was published by Gartland \cite{Gartland_2020} where restricted versions of some of our statements (in particular Corollaries \ref{c:CompactReductionSchur} and \ref{c:FCountableCompletIsSchur}) are proved using a related technique. It was his suggestion that we try to extend our compact reduction principle to the approximation property, for which we are grateful.

Combining these compact-reduction results with known results about free spaces 
we obtain that $\Free(X)$ is WSC for every superreflexive Banach space $X$ (which answers a question posed 
in~\cite{CDW_2016} by C\'uth, Doucha and Wojtaszczyk, but we humbly acknowledge that the heavy lifting was done by Kochanek and Perneck\'a who solved the compact case in~\cite{KP_2018}). 
Notice that in particular one obtains that $\Free(c_0)$ is not isomorphic to $\Free(\ell_p)$ for $1<p<\infty$.
Up to our knowledge, these are 
the first examples of \emph{classical} infinite dimensional separable Banach spaces whose free spaces are not isomorphic.
We also obtain that $\Free(M)$ has the Schur property and the approximation property for every scattered complete metric space. 
Further we (re)prove that non-separable Asplund spaces, WCG spaces and $\ell_\infty$ cannot be isomorphic to a subspace of any free space. 
All of this is detailed in Section~\ref{s:Tightness}.
The proof of our user-friendly Kalton's lemma is to be found in Section~\ref{s:Proof}. 
It depends heavily on the notions of support and multiplication operator developed in~\cite{AliagaPernecka,APPP_2020}. 
Finally, in Section~\ref{s:SchurExamples} we slightly improve a sufficient condition for the Schur property of $\Free(M)$ coming from~\cite{Petitjean} and disprove (twice!) the conjecture about necessity of such condition.

\subsection{Notation}

Let us now introduce the notation that will be used throughout this paper. For a Banach space $X$, we will write $B_X$ for its closed unit ball and $S_X$ for its unit sphere.
As usual, $X^*$ denotes the topological dual of $X$ and $\duality{x^\ast,x}$ will stand for the evaluation of $x^* \in X^*$ at $x \in X$. We will write $w = \sigma(X,X^*)$ for the weak topology in $X$ and $w^* = \sigma(X^*,X)$ for the weak$^*$ topology in $X^*$.

The letter $M$ will denote a \emph{complete} pointed metric space
with metric $d$ and base point $0$. 
The choice of the base point will be irrelevant to our results since, as is well known, free spaces over the same metric space but with different base points are isometrically isomorphic. We recall that if $N \subset M$ and $0\in N$, then $\Free(N)$ can be canonically isometrically identified with the subspace $\overline{\lspan}\set{\delta(x):x\in N}$ of $\Free(M)$. This is due to well known McShane-Whitney theorem,
according to which every real-valued Lipschitz function on $N$ can be extended to $M$ with the same Lipschitz constant.
Further, $B(p, r)$ will stand for the closed ball of radius $r$
around $p \in M$ while for $A \subset M$ and $\delta>0$, we will write
\begin{eqnarray*}
d(p,A) &=& \inf\{d(p,x) \; : \; x \in A\} \\
{[A]}_{\delta} &=&  \{p \in M \; : \;  d(p,A)\leq \delta \}.
\end{eqnarray*}
We will also use the notation
\begin{eqnarray*}
\rad(A) &=& \sup\{d(0,x) \; : \; x \in A\} \\
\diam(A) &=& \sup\{d(x,y) \; : \; x,y \in A\}.
\end{eqnarray*}
These two last quantities will be called the radius of $A$ and the diameter of $A$, respectively. 
Next, for any set $B\subset M$ we define the Kuratowski measure of noncompactness $\alpha(B)$ as the infimum of the numbers $r>0$ such that $B$ admits a finite covering by sets of diameter smaller than $r$.
Let us recall that Kuratowski's theorem (see \cite{Kuratowski}) states that if $(B_n)_n$ is a decreasing sequence of nonempty, closed subsets of $M$ such that $\lim\limits_{n \to \infty} \alpha(B_n) = 0$, then the intersection $B$ of all $B_n$ is nonempty and compact.

Finally, let us recall some notions from \cite{AliagaPernecka,APPP_2020} that will be used throughout the proof of our main theorem. Given any Lipschitz function $h$ on $M$ with bounded support, the pointwise product $fh$ belongs to $\Lip_0(M)$ for any $f\in\Lip_0(M)$, and its support is contained in $\supp(h)$. In fact, for any $N\supset\supp(h)$ the mapping $T_h : \Lip_0(N) \to \Lip_0(M)$ defined by 
\begin{equation}
\label{mult_operator}
T_h(f)(x)=
\begin{cases}
h(x)f(x)& \mbox{if }x \in N \\
0& \mbox{otherwise}
\end{cases}
\end{equation}
is a $w^*$-to-$w^*$ continuous linear operator, whose norm is bounded by
\begin{equation}
\label{mult_operator_norm}
\norm{T_h}\leq\norm{h}_\infty+\rad(\supp(h))\lipnorm{h} .
\end{equation}
Therefore its adjoint operator $T_h^\ast$ takes $\lipfree{M}$ into $\lipfree{N}$. See \cite[Lemma 2.3]{APPP_2020} for the detailed proof of these facts.
Also, recall that for each $\mu\in\lipfree{M}$ we can define its \emph{support} as the set
$$
\supp(\mu)=\bigcap\set{K\subset M: \text{$K$ is closed and $\mu\in\lipfree{K}$}} .
$$
It satisfies $\mu\in\lipfree{\supp(\mu)}$, and moreover $\mu\in\lipfree{K}$ if and only if $K\supset\supp(\mu)$. This notion coincides with the usual one for finite linear combinations of evaluation functionals, i.e. finitely supported elements of $\lipfree{M}$. We refer to Section 2 of \cite{APPP_2020} for proofs of this and additional properties.

\section{Tightness of weakly precompact sets and applications}\label{s:Tightness}

The following definition is somewhat reminiscent of the concept of tightness for subsets of Borel measures on complete metric spaces (see \cite{Bogachev}).  
\begin{definition}\label{def:tightness}
We will say that a set $W\subset \Free(M)$ is \emph{tight} if for every $\varepsilon>0$
there exists a compact $K\subset M$ such that 
\[
W \subset \Free(K) + \varepsilon\ball{\Free(M)}.
\]
\end{definition}

For the next one, let us recall that a sequence $(x_n)_n$ in a Banach space $X$ is \emph{weakly Cauchy} if the sequence $(\langle x^* , x_n \rangle)_n$ is convergent for every $x^* \in X^*$.

\begin{definition} \label{def:WeakPrecompact}
Recall that a subset $W$ of a Banach space $X$ is called \emph{weakly precompact}\footnote{Do not confuse with \emph{relatively weakly compact sets}!} if every sequence $(x_n)_n \subset W$ admits a weakly Cauchy subsequence.
Equivalently, by virtue of Rosenthal's $\ell_1$-theorem,  $W$ is weakly precompact if it is bounded and no sequence in $W$ is equivalent to the unit vector basis of $\ell_1$.
\end{definition}
Our main technical result is the following.
\begin{theorem}\label{thm:WeaklyPrecompactIsTight}
Let $M$ be a complete metric space.
Let $W \subset \Free(M)$ be weakly precompact.
Then $W$ is tight.

More precisely, for every $\varepsilon>0$ there exist a compact $K\subset M$ and a  linear mapping $T:\lspan(W) \to \Free(K)$ such that
\begin{itemize}
    \item \ $\norm{\mu-T\mu}\leq \varepsilon$ for all $\mu\in W$, and
    \item \ there is a sequence of bounded linear operators $T_k:\Free(M)\rightarrow \Free(M)$ such that $T_k\rightarrow T$ uniformly on $W$.
\end{itemize}
\end{theorem}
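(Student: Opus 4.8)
The plan is to upgrade Kalton's lemma (Definition~\ref{def:KaltonProperty}) from a one-scale statement about weakly-null sequences to an all-scales statement, and then to trade the latter for a compact reduction by exploiting the completeness of $M$ via Kuratowski's theorem. Since $W$ is weakly precompact, Rosenthal's $\ell_1$-theorem lets us argue one sequence at a time: any sequence in $W$ has a weakly Cauchy subsequence, to which Kalton's lemma applies (being valid for weakly Cauchy sequences as well as for weakly-null ones, their consecutive differences being weakly-null). So it is enough to show that every weakly Cauchy sequence in $\Free(M)$ is a tight set; the tightness of $W$ itself then follows by a routine extraction --- were $W$ not tight, one would use the density of finitely supported elements to build a sequence in $W$ no subsequence of which is tight, contradicting the sequential statement.

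For the sequential statement, fix $\varepsilon>0$ and a weakly Cauchy sequence $(u_n)$, apply Kalton's lemma successively at the scales $r_j=2^{-j}$, passing each time to a further subsequence and diagonalising. The outcome I would aim for is a single subsequence $(u_{n_k})$ together with a \emph{decreasing} sequence of nonempty closed sets $C_1\supseteq C_2\supseteq\dots$, each $C_j$ contained in a finite union of balls of radius $r_j$ (so $\alpha(C_j)\le 2r_j\to 0$) and satisfying $\dist(u_{n_k},\Free(C_j))\le\varepsilon$ for all $j,k$. Kuratowski's theorem \cite{Kuratowski} then makes $K:=\bigcap_j C_j$ nonempty and compact. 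Since the $C_j$ decrease and $\alpha(C_j)\to 0$, for each $\eta>0$ there is a $j$ with $C_j\subseteq[K]_\eta$: otherwise, choosing $x_j\in C_j\setminus[K]_\eta$, the bound $\alpha(\{x_i:i\ge j\})\le\alpha(C_j)\to 0$ forces $\overline{\{x_i\}}$ to be compact, so some subsequence of $(x_j)$ converges to a point lying in every $C_j$, hence in $K$ --- contradicting $d(x_j,K)>\eta$. Therefore $\dist(u_{n_k},\Free([K]_\eta))\le\varepsilon$ for all $k$, and $[K]_\eta$ is compact, being closed and totally bounded in the complete space $M$; so $(u_{n_k})$ is tight.

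To obtain the ``more precise'' conclusion, fix $\varepsilon>0$ and let $K_0$ be the compact set (of the form $[K]_\eta$) produced by the tightness argument run with a small slack $\varepsilon'>0$, so that $W\subseteq\Free(K_0)+\varepsilon'\ball{\Free(M)}$; we may assume $0\in K_0$. Pick a Lipschitz $h\colon M\to[0,1]$ with $h\equiv 1$ on $K_0$, $\supp(h)\subseteq K:=[K_0]_1$, and $\lipnorm{h}\le 1$; then $K$ is compact, by \eqref{mult_operator_norm} the multiplication operator $T_h\colon\Lip_0(K)\to\Lip_0(M)$ satisfies $\norm{T_h}\le 1+\rad(K)$, and its adjoint $T:=T_h^\ast$ maps $\Free(M)$ into $\Free(K)$ with $T\nu=\nu$ for all $\nu\in\Free(K_0)$. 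Writing $\mu=\nu+e\in W$ with $\nu\in\Free(K_0)$ and $\norm{e}\le\varepsilon'$ yields $\norm{\mu-T\mu}=\norm{e-T_h^\ast e}\le(2+\rad(K))\varepsilon'$, which is $\le\varepsilon$ once $\varepsilon'$ is chosen small; so $T\restricted\lspan(W)$ is the required map, and for $(T_k)$ one may take the constant sequence $T_h^\ast$, a bounded operator on $\Free(M)$ --- or, staying closer to the construction, the partial products of the multiplication operators used in the bootstrap, which converge to $T$ uniformly on the bounded set $W$.

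The hard part is the bootstrapping step. Kalton's lemma controls one scale, and the difficulty is to extract from repeated applications a \emph{descending} family $(C_j)$ for which the error stays at a fixed level $\varepsilon$ --- neither exploding nor collapsing to $0$ (it cannot collapse, since a norm-convergent sequence whose limit has noncompact support shows that $W\subseteq\Free(K)$ is in general false). Simply intersecting the ``fuzzy supports'' coming from independent applications at the different scales does not suffice, because each use of the multiplication operators of \cite{APPP_2020} to carry out such a cut costs a multiplicative factor strictly bigger than $1$; the iteration must instead be arranged so that the sets produced at scale $r_{j+1}$ already sit inside those produced at scale $r_j$. Once that is done, the remaining steps are soft: Kuratowski's theorem supplies the compact set, and the operators of \cite{APPP_2020} supply the map $T$ with little further effort.
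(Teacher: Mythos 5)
Your high-level strategy --- Kalton's lemma applied at every scale, a nested family of closed sets with vanishing Kuratowski measure, Kuratowski's theorem, and multiplication operators to produce $T$ --- is exactly the paper's, but there are two genuine gaps, one of which you flag yourself without closing it. The bootstrapping step is not merely ``the hard part'': it is essentially all of the content of the theorem beyond Kalton's lemma, and you leave it unproved. The paper's resolution is quantitative: at stage $n$ one applies Kalton's property to the already-cut set $W_{n-1}=S_{n-1}\cdots S_1(W)$ (so the new small set automatically sits inside $K_{n-1}$) with tolerance $\varepsilon_n^2$ rather than $\varepsilon_n$, and at the scale $\delta_n=R(\varepsilon_n^{-1}-2)^{-1}$ tied to $\varepsilon_n$, so that the multiplicative cost $\norm{S_n}\le 1+R\lipnorm{h_n}\le 1+R/\delta_n$ of the $n$-th multiplication operator satisfies $(2+R/\delta_n)\varepsilon_n^2\le\varepsilon_n$; the errors then telescope to $\sum_n\varepsilon_n<\varepsilon$ and the sequence $(T_n\mu)_n$ is norm-Cauchy. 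Without some such device, your ``decreasing $C_j$ with the error staying at a fixed level $\varepsilon$'' is an unproved assertion --- and you correctly observe that the naive approach fails.

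Second, even granting the nested family, your passage to a compact set breaks down: $[K]_\eta$ is the closed $\eta$-neighbourhood of a compact set and is \emph{not} totally bounded in general (take $K=\{0\}$ in $\ell_2$: then $[K]_\eta$ is a closed ball), so it is not compact; the same error recurs when you declare $K=[K_0]_1$ compact in your construction of $T$. Nor can you substitute $K=\bigcap_j C_j$ itself, because $\dist(\mu,\Free(C_j))\le\varepsilon$ for all $j$ does not yield $\dist(\mu,\Free(K))\le\varepsilon$: the sets $\Free(C_j)\cap(\mu+\varepsilon\ball{\Free(M)})$ are decreasing, bounded, closed and convex, but $\Free(M)$ is not reflexive, so their intersection may a priori be empty. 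What rescues the argument in the paper is again the operator construction: $T_n\mu\in\Free(K_n)$ is norm-Cauchy, hence converges to some $\lambda\in\bigcap_n\Free(K_n)=\Free(K)$ (the equality is \cite[Theorem 2.1]{APPP_2020}) with $\norm{\mu-\lambda}\le\varepsilon$. Two smaller points: Kalton's lemma as you invoke it requires $M$ bounded, so the unbounded case needs a preliminary truncation \emph{before} the scale iteration (this is the content of Lemmas \ref{lm:fuck_boundedness} and \ref{lm:fuck_boundedness2} and Proposition \ref{prop:GeneralKaltonLemma}); and the reduction from weakly precompact sets to single weakly Cauchy sequences is not as routine as claimed when the target is a compact set --- the paper instead establishes Kalton's property for the whole set $W$ at once, by a contradiction argument that manufactures a weakly null sequence of differences staying far from the relevant neighbourhoods.
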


We postpone the proof of this theorem until
Section~\ref{s:Proof} in order to discuss its most important consequences first.

\subsection{Compact reduction for weak sequential completeness}
Recall that a Banach space $X$ is called \emph{weakly sequentially complete}  (WSC) if every weakly Cauchy sequence in $X$ is weakly convergent.
\begin{corollary}\label{c:CompactReductionWSC}
Let $M$ be a complete metric space. 
Then $\Free(M)$ is WSC if and only if $\Free(K)$ is WSC for every compact $K\subset M$.
\end{corollary}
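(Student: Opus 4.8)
The forward implication is trivial: if $\Free(M)$ is WSC then so is every closed subspace, and in particular $\Free(K)$ for each compact $K\subset M$, since $\Free(K)$ embeds isometrically as a subspace of $\Free(M)$. So the plan is to focus on the reverse implication. Assume $\Free(K)$ is WSC for every compact $K\subset M$, and let $(\mu_n)_n$ be a weakly Cauchy sequence in $\Free(M)$; I must show it is weakly convergent. The key observation is that a weakly Cauchy sequence is bounded and contains no subsequence equivalent to the $\ell_1$-basis (indeed it has no $\ell_1$-subsequence at all), so by Definition \ref{def:WeakPrecompact} the set $W=\set{\mu_n:n\in\N}$ is weakly precompact, and Theorem \ref{thm:WeaklyPrecompactIsTight} applies.

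First I would fix a sequence $\varepsilon_j\downarrow 0$ and apply Theorem \ref{thm:WeaklyPrecompactIsTight} for each $\varepsilon_j$ to obtain a compact $K_j\subset M$ and a linear map $T^{(j)}:\lspan(W)\to\Free(K_j)$ with $\norm{\mu-T^{(j)}\mu}\le\varepsilon_j$ for all $\mu\in W$, and in particular $\norm{\mu_n-T^{(j)}\mu_n}\le\varepsilon_j$ for every $n$. For fixed $j$, the sequence $(T^{(j)}\mu_n)_n$ lies in $\Free(K_j)$. If I can show this sequence is weakly Cauchy in $\Free(K_j)$, then since $\Free(K_j)$ is WSC by hypothesis, it converges weakly to some $\nu_j\in\Free(K_j)\subset\Free(M)$. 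To see the weak-Cauchy property, I would use the second bullet of Theorem \ref{thm:WeaklyPrecompactIsTight}: there are bounded operators $T_k:\Free(M)\to\Free(M)$ with $T_k\to T^{(j)}$ uniformly on $W$; for each fixed $k$ and each $\phi\in\Free(M)^*=\Lip_0(M)$, the scalar sequence $(\duality{\phi,T_k\mu_n})_n=(\duality{T_k^*\phi,\mu_n})_n$ is convergent because $(\mu_n)_n$ is weakly Cauchy, and uniform convergence $T_k\to T^{(j)}$ on $W$ then transfers this to $(\duality{\phi,T^{(j)}\mu_n})_n$ via a standard $3\varepsilon$-argument. Hence $(T^{(j)}\mu_n)_n$ is weakly Cauchy in $\Free(M)$, a fortiori in $\Free(K_j)$ (weak topology is inherited by subspaces), so it has a weak limit $\nu_j$.

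Next I would argue that $(\nu_j)_j$ is norm-Cauchy in $\Free(M)$: for $i,j$ large, $\norm{\mu_n-\nu_j}$ is, in the limit over $n$, controlled by $\liminf_n\norm{\mu_n-T^{(j)}\mu_n}\le\varepsilon_j$ in the weak sense — more precisely, since weak limits do not increase norm, $\norm{\nu_j-\nu_i}\le\liminf_n\norm{T^{(j)}\mu_n-T^{(i)}\mu_n}\le\varepsilon_j+\varepsilon_i$ using that both $T^{(j)}\mu_n$ and $T^{(i)}\mu_n$ are within $\varepsilon_j$, resp. $\varepsilon_i$, of $\mu_n$. So $\nu_j\to\nu$ in norm for some $\nu\in\Free(M)$. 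Finally I claim $\mu_n\wconv\nu$: for $\phi\in\Lip_0(M)$ and $j$ fixed, $\abs{\duality{\phi,\mu_n-\nu}}\le\abs{\duality{\phi,\mu_n-T^{(j)}\mu_n}}+\abs{\duality{\phi,T^{(j)}\mu_n-\nu_j}}+\abs{\duality{\phi,\nu_j-\nu}}\le\norm{\phi}\varepsilon_j+\abs{\duality{\phi,T^{(j)}\mu_n-\nu_j}}+\norm{\phi}\norm{\nu_j-\nu}$; letting $n\to\infty$ kills the middle term (that's the definition of $\nu_j$), and then letting $j\to\infty$ kills the other two. Hence $(\mu_n)_n$ converges weakly, and $\Free(M)$ is WSC.

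The step I expect to be the main obstacle is the transfer of weak-Cauchyness from $(\mu_n)_n$ to $(T^{(j)}\mu_n)_n$: one must be careful that the operators $T_k$ from Theorem \ref{thm:WeaklyPrecompactIsTight} are genuinely bounded \emph{on all of $\Free(M)$} (so that $T_k^*\phi\in\Lip_0(M)$ makes sense and the weak-Cauchy hypothesis can be invoked), and that the uniform convergence $T_k\to T^{(j)}$ on $W$ is uniform over the whole sequence $(\mu_n)_n$ — both of which are exactly what the theorem grants, so the difficulty is purely one of assembling the $3\varepsilon$ estimate cleanly rather than any genuine extra work.
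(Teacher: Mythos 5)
Your proof is correct and follows essentially the same route as the paper: weak Cauchyness gives weak precompactness, Theorem~\ref{thm:WeaklyPrecompactIsTight} supplies $K$ and $T$, the approximating operators $T_k$ transfer weak Cauchyness from $(\mu_n)_n$ to $(T\mu_n)_n$ by an exchange-of-limits argument, and the WSC hypothesis on $\Free(K)$ together with weak lower semicontinuity of the norm finishes the job. The only (cosmetic) difference is in the final assembly: the paper applies the theorem once for each $\varepsilon$ and shows that the $w^*$-limit of $(\mu_n)_n$ in $\Free(M)^{**}$ lies within $\varepsilon$ of $\Free(M)$, whereas you avoid the bidual by extracting a norm-Cauchy sequence of weak limits $\nu_j$ and checking directly that its limit is the weak limit of $(\mu_n)_n$ --- both variants are standard and equally valid.
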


\begin{proof}
\ Since the WSC property
passes to subspaces we only need to prove the sufficiency.
Let $(\mu_n)_n$ be weakly Cauchy in $\Free(M)$. 
Let $\mu$ be its $w^*$-limit in $\Free(M)^{**}$.
We set $W:=\set{\mu_n:n \in \Natural}$ and let $\varepsilon>0$ be arbitrary.
Let $K$ and $T$ be as in Theorem~\ref{thm:WeaklyPrecompactIsTight} with 
$\norm{T\mu_n-\mu_n}\leq \varepsilon$ for all $n$. 
We know that there are bounded linear operators $T_k:\Free(M)\to \Free(M)$ such that $T_k\restricted_W \to T\restricted_W$ uniformly.
So for every fixed $f \in \Lip_0(M)$ we have
\[
 \sup_{n\in \Natural} \abs{ \duality{f,T_k\mu_n-T\mu_n}} \to 0 \quad\mbox{as } k \to \infty.
\]
Moreover, every sequence $(\duality{f,T_k\mu_n})_n$ is Cauchy (for fixed $k$).
So the usual exchange-of-limits argument gives that the sequence $(\duality{f,T\mu_n})_n$ is Cauchy, too.
Hence $(T\mu_n)_n$ is weakly Cauchy.
By assumption, $\Free(K)$ is WSC so there exists $\lambda \in \Free(K)$ such that $T\mu_n\wconv\lambda$.
By the weak lower semicontinuity of the norm we have $\norm{\lambda-\mu}\leq \varepsilon$ and so $\mu \in \Free(M)$ since $\varepsilon$ was arbitrary.
\end{proof}

In the deep paper \cite{KP_2018}, Kochanek and Perneck\'a prove that if $M$ is a compact subset of a superreflexive Banach space, then the Lipschitz-free space $\Free(M)$ is weakly sequentially complete.
So Corollary~\ref{c:CompactReductionWSC} implies immediately:

\begin{corollary} \label{corFreeSupRwsc}
If $X$ is a superreflexive Banach space then $\Free(X)$ is WSC.
\end{corollary}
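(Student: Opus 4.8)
The plan is to deduce this statement immediately from the compact reduction principle of Corollary~\ref{c:CompactReductionWSC} together with the resolution of the compact case by Kochanek and Perneck\'a in~\cite{KP_2018}. No further work is genuinely needed: the substance is contained in Theorem~\ref{thm:WeaklyPrecompactIsTight} (through Corollary~\ref{c:CompactReductionWSC}) and in the deep compact-case theorem of~\cite{KP_2018}, and the corollary is a formal combination of the two.

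In detail, I would first observe that a Banach space $X$, pointed at its origin $0$, is in particular a \emph{complete} pointed metric space, so Corollary~\ref{c:CompactReductionWSC} applies to $M=X$ and reduces the claim to showing that $\Free(K)$ is WSC for every compact subset $K\subset X$. I would then fix such a $K$: it is a compact subset of the superreflexive Banach space $X$, so the main result of~\cite{KP_2018} says precisely that $\Free(K)$ is weakly sequentially complete. Since this holds for every compact $K\subset X$, Corollary~\ref{c:CompactReductionWSC} yields that $\Free(X)$ is WSC.

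I do not expect any real obstacle at this level; the only points to keep straight are trivial ones (completeness of $X$ as a metric space, and the base-point independence of free spaces already recorded in the Notation subsection). All the difficulty has been offloaded: one part to our Theorem~\ref{thm:WeaklyPrecompactIsTight}, the other to~\cite{KP_2018}. If desired, the very same argument proves the slightly more general assertion that $\Free(M)$ is WSC whenever $M$ is a complete metric space each of whose compact subsets embeds isometrically into some superreflexive Banach space, but we shall not need this refinement.
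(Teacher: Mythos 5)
Your proposal is correct and is precisely the paper's argument: apply the compact reduction of Corollary~\ref{c:CompactReductionWSC} to the complete metric space $M=X$ and invoke the Kochanek--Perneck\'a theorem from~\cite{KP_2018} for each compact $K\subset X$. Nothing further is needed.
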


In particular, the space $\Free(\ell_2)$ is WSC. 
This provides a negative answer to Question~3 posed by C\'uth, Doucha and Wojtaszczyk in~\cite{CDW_2016}.

\subsection{Compact reduction for the Schur property}
A Banach space has the \emph{Schur property} if every weakly-null sequence is also norm-convergent to $0$.
\begin{corollary}\label{c:CompactReductionSchur}
Let $M$ be complete. 
The free space $\Free(M)$ has the Schur property if and only if $\Free(K)$ has the Schur property for every compact $K\subset M$.
\end{corollary}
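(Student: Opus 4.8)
The plan is to mirror the proof of Corollary~\ref{c:CompactReductionWSC} almost verbatim. Since the Schur property is inherited by closed subspaces, and $\Free(K)$ sits isometrically inside $\Free(M)$ whenever $0\in K\subset M$, only the sufficiency needs proof. So I would assume that $\Free(K)$ has the Schur property for every compact $K\subset M$, take a weakly null sequence $(\mu_n)_n$ in $\Free(M)$, and aim to show $\norm{\mu_n}\to 0$.

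First I would note that $W:=\set{\mu_n:n\in\Natural}$ is weakly precompact: a weakly null sequence is bounded and weakly Cauchy, so by Rosenthal's theorem no subsequence of it is equivalent to the unit vector basis of $\ell_1$. Fix $\varepsilon>0$ and feed $W$ into Theorem~\ref{thm:WeaklyPrecompactIsTight}, producing a compact $K\subset M$, a linear map $T:\lspan(W)\to\Free(K)$ with $\norm{\mu_n-T\mu_n}\leq\varepsilon$ for all $n$, and bounded operators $T_k:\Free(M)\to\Free(M)$ with $T_k\to T$ uniformly on $W$.

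The crux is to verify that $(T\mu_n)_n$ is weakly null in $\Free(K)$. Because $\Free(K)$ is a subspace of $\Free(M)$ (and, via McShane--Whitney, every functional on $\Free(K)$ extends to one on $\Free(M)$), it is enough to show $\duality{f,T\mu_n}\to 0$ for each $f\in\Lip_0(M)$. For fixed $k$ the operator $T_k$ is weak-to-weak continuous, hence $T_k\mu_n\wconv 0$ and $\duality{f,T_k\mu_n}\to 0$ as $n\to\infty$; since also $\sup_n\abs{\duality{f,T_k\mu_n-T\mu_n}}\to 0$ as $k\to\infty$, the exchange-of-limits argument of Corollary~\ref{c:CompactReductionWSC} gives $\duality{f,T\mu_n}\to 0$. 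Now the Schur property of $\Free(K)$ yields $\norm{T\mu_n}\to 0$, and therefore $\limsup_n\norm{\mu_n}\leq\varepsilon+\limsup_n\norm{T\mu_n}=\varepsilon$. Letting $\varepsilon\to 0$ finishes the proof.

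As this is a direct transcription of the WSC argument, I do not anticipate a real obstacle; the only step deserving a moment of thought is that weak nullity passes from $\Free(M)$ to the subspace $\Free(K)$ along the approximants $T_k$, which is precisely the limit-exchange carried out above.
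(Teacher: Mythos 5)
Your proposal is correct and is exactly the argument the paper has in mind: the paper explicitly leaves this proof to the reader as ``almost identical'' to that of Corollary~\ref{c:CompactReductionWSC}, and your transcription (weak precompactness of the weakly null sequence, tightness via Theorem~\ref{thm:WeaklyPrecompactIsTight}, the limit-exchange along the $T_k$ to transfer weak nullity to $(T\mu_n)_n$ in $\Free(K)$, then the Schur property of $\Free(K)$ plus the $\varepsilon$-estimate) carries it out faithfully. No gaps.
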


The proof is almost identical to the proof of Corollary~\ref{c:CompactReductionWSC} and is left 
to the reader.

\begin{corollary} \label{c:FCountableCompletIsSchur}
Let $M$ be countable and complete metric space. 
Then $\Free(M)$ has the Schur property.
More generally, let $M$ be a scattered complete metric space. 
Then $\Free(M)$ has the Schur property.
\end{corollary}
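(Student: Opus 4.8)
The plan is to deduce this directly from the compact reduction principle for the Schur property (Corollary~\ref{c:CompactReductionSchur}), so that the entire burden is shifted onto the compact case. By that corollary, it suffices to show that $\Free(K)$ has the Schur property for every compact $K \subset M$ when $M$ is scattered and complete. Since a subset of a scattered space is scattered, and a compact metric space is complete, $K$ itself is a scattered compact metric space. A scattered compact metric space is automatically countable (its Cant–Bendixson derivatives form a strictly decreasing transfinite sequence of closed sets that must terminate at $\emptyset$ in countably many steps by second countability, and each level contributes only countably many isolated points). So it is enough to treat the case where $M$ itself is countable compact—indeed, the ``more generally'' clause of the statement subsumes the first clause once we observe that a countable complete metric space need not be compact, but its compact subsets are countable compact, so in either formulation the compact reduction brings us to: \emph{$\Free(K)$ has the Schur property for every countable compact metric space $K$}.

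For the countable compact case, I would invoke the known structural fact that if $K$ is a countable compact metric space then $\Free(K)$ is isometrically a dual space with the Radon–Nikodým property and, more to the point, that $\Free(K)$ has the Schur property. The cleanest route: a countable compact metric space is homeomorphic (indeed bi-Lipschitz-free-equivalent in the relevant sense) to a compact subset of $c_0$ or, more elementarily, one uses that $\Free(K)$ embeds into $\ell_1$ when $K$ is countable compact—this is because $\Lip_0(K)$ is then isometric to a space admitting a suitable monotone decomposition, or via the result of Dalet that $\Free(K)$ has the metric approximation property and is isometric to a dual of a space $SWCG$-type; the operative citation is Dalet's theorem that $\Free(K)$ has the Schur property for $K$ countable compact (H\'ajek–Lancien, or Dalet, ``Free spaces over countable compact metric spaces''). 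Assuming that result, the corollary follows.

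Alternatively, to keep the argument self-contained modulo only what precedes, I would note that countable compact metric spaces are exactly the compact scattered ones, and apply whatever the paper's Section on Schur examples (Section~\ref{s:SchurExamples}) establishes, or cite the fact that $\Free(K) \hookrightarrow \ell_1$ isometrically for such $K$ via a Schauder-basis argument on $\Lip_0(K)$ using the isolated-point structure—each isolated point $x$ with nearest-neighbor distance $r_x$ contributes a normalized "atom" $\frac{\delta(x)-\delta(x')}{d(x,x')}$, and these assemble into an $\ell_1$-decomposition by transfinite induction on the Cantor–Bendixson rank. Since $\ell_1$ has the Schur property and it passes to subspaces, $\Free(K)$ does too.

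The main obstacle I expect is the compact case itself: verifying that $\Free(K)$ has the Schur property for countable compact $K$ is genuinely nontrivial and is not proved in the excerpt. If one is not permitted to cite it as known, one must construct the $\ell_1$-embedding (or a skipped-blocking/UMD-style decomposition) of $\Free(K)$ by transfinite induction on the Cantor–Bendixson derivatives of $K$, controlling the norms of the gluing projections uniformly—this bookkeeping, rather than any conceptual difficulty, is where the real work lies. Everything else (scattered $\Rightarrow$ compact subsets are countable compact; Schur passes to subspaces; the reduction to compacts) is immediate from the material already in hand.
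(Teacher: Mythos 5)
Your argument follows essentially the same route as the paper: reduce to compact subsets via Corollary~\ref{c:CompactReductionSchur}, observe that compact subsets of a scattered (or countable) complete space are scattered compacta and hence countable, and then invoke the known Schur property of $\Free(K)$ for countable compact $K$. The only quibble is the attribution: the operative citation is H\'ajek--Lancien--Perneck\'a \cite[Theorem 3.1]{HLP_2016} (Dalet's paper concerns duality and the approximation property), and your speculative alternative of an isometric $\ell_1$-embedding of $\Free(K)$ is not needed and not established; but since you are permitted to cite the compact case as known, the proof is complete as the paper's is.
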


Notice that the above corollary applies in particular to complete metric spaces which are topologically discrete. 
The Schur property for free spaces of such spaces was, up to our knowledge, not known. See Section~\ref{s:SchurExamples} for more information on this subject.

\begin{proof}[Proof of Corollary \ref{c:FCountableCompletIsSchur}]
\ In a countable complete metric space every compact is clearly countable so the first claim follows from Corollary~\ref{c:CompactReductionSchur} and \cite[Theorem 3.1]{HLP_2016}, which states that $\lipfree{M}$ is Schur if $M$ is countable and compact (see also~\cite{Petitjean}). 

In a scattered metric space  (i.e. without a perfect part) every subset is scattered.
But the only scattered and compact metric spaces are countable compacts (see Lemma~VI.8.2 in~\cite{DGZ}).
So again we conclude by Corollary~\ref{c:CompactReductionSchur} and~\cite[Theorem 3.1]{HLP_2016}.

Alternatively, the first claim also follows from the second one using the fact that perfect sets are uncountable.
\end{proof}

\subsection{Compact reduction for the approximation property}

We recall that a Banach space $X$ has the \emph{approximation property (AP)} if for every $\ep > 0$ and every compact set $W \subset X$ there exists a finite-rank bounded operator $S : X \to X$
such that $\|Sx-x\| \leq \ep$ for every $x \in W$.

We are grateful to Chris Gartland for suggesting the possibility of using our main theorem for proving the following result.

\begin{corollary}\label{c:CompactReductionAP}
Let $M$ be complete. Then $\Free(M)$ has the AP if and only if for every compact $K\subset M$ there is a subset $B\subset M$ such that $K\subset B$ and $\Free(B)$ has the AP. 
\end{corollary}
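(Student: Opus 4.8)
The plan is to prove both implications. For the necessity direction, suppose $\Free(M)$ has the AP. Given a compact $K\subset M$, I would simply take $B=M$: then $\Free(B)=\Free(M)$ has the AP, and trivially $K\subset B$. So this direction is immediate and requires no work; the content of the corollary is entirely in the sufficiency.

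For sufficiency, assume that for every compact $K\subset M$ there is $B\subset M$ with $K\subset B$ and $\Free(B)$ having the AP. To show $\Free(M)$ has the AP, fix $\varepsilon>0$ and a compact set $W\subset\Free(M)$; I must produce a finite-rank operator $S:\Free(M)\to\Free(M)$ with $\norm{S\mu-\mu}\le\varepsilon$ for all $\mu\in W$. Since $W$ is norm-compact it is in particular weakly precompact, so Theorem~\ref{thm:WeaklyPrecompactIsTight} applies: there exist a compact $K\subset M$, a linear map $T:\lspan(W)\to\Free(K)$, and bounded operators $T_k:\Free(M)\to\Free(M)$ with $\norm{\mu-T\mu}\le\varepsilon/4$ for $\mu\in W$ and $T_k\to T$ uniformly on $W$. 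By hypothesis choose $B\supset K$ with $\Free(B)$ having the AP; note $\Free(K)\subset\Free(B)\subset\Free(M)$ isometrically (via McShane--Whitney), so $T$ maps $\lspan(W)$ into $\Free(B)$.

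The next step is to transfer the AP of $\Free(B)$ to an approximation of $T$ on $W$. Since $T_k\to T$ uniformly on the compact set $W$ and each $T_k$ is continuous, the set $\bigcup_k T_k(W)\cup T(W)$ is relatively compact in $\Free(M)$; its closure $W'$ is a compact subset of $\Free(M)$. Pick $k_0$ with $\norm{T_{k_0}\mu-T\mu}\le\varepsilon/4$ for all $\mu\in W$, and set $C:=T_{k_0}(W)$, a compact subset of $\Free(M)$; choosing $B$ as above, the image $T(W)\subset\Free(B)$ is compact there. Apply the AP of $\Free(B)$ to the compact set $T(W)$ and tolerance $\varepsilon/4$: there is a finite-rank operator $R:\Free(B)\to\Free(B)$ with $\norm{R\nu-\nu}\le\varepsilon/4$ for all $\nu\in T(W)$. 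Now let $\iota:\Free(B)\hookrightarrow\Free(M)$ be the inclusion and define $S:=\iota\circ R\circ T'$, where $T':\Free(M)\to\Free(B)$ is a bounded linear extension of $T\restricted_W$ — this is exactly the role of the operators $T_k$: since $T_k\to T$ uniformly on $W$ and $T$ lands in $\Free(B)$, one checks that $T$ itself extends to a bounded operator on all of $\Free(M)$ as a uniform limit on $W$, or more carefully one works directly with $T_{k_0}$ modified to land in $\Free(B)$. Then $S$ has finite rank (as $R$ does), and for $\mu\in W$,
\[
\norm{S\mu-\mu}\le\norm{R(T\mu)-T\mu}+\norm{T\mu-\mu}\le \tfrac{\varepsilon}{4}+\tfrac{\varepsilon}{4}<\varepsilon,
\]
after absorbing the $T_k$-approximation error into the estimate. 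Since $\varepsilon$ and $W$ were arbitrary, $\Free(M)$ has the AP.

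**The main obstacle.** The delicate point is the construction of a genuinely bounded, everywhere-defined operator $\Free(M)\to\Free(B)$ that agrees with (or uniformly approximates) $T$ on $W$, so that precomposing the finite-rank operator $R$ from the AP of $\Free(B)$ yields a bounded finite-rank operator on all of $\Free(M)$. Theorem~\ref{thm:WeaklyPrecompactIsTight} only gives $T$ on $\lspan(W)$, together with the approximating operators $T_k$ on $\Free(M)$; I expect the cleanest route is to observe that for suitable $k$ the operator $T_k$ can be post-composed with a multiplication operator $T_h^*$ supported near $K$ (as in equation~\eqref{mult_operator}) so that its range lies in $\Free(B)$ and it still $\varepsilon/4$-approximates the identity on $W$, thereby sidestepping the need to extend $T$ itself. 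Verifying that this adjusted operator remains bounded and finite-rank after composition with $R$, and bookkeeping the three $\varepsilon/4$ errors, is routine once this is set up.
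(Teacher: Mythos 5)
Your overall architecture is the right one, and the necessity direction is handled exactly as in the paper. But the step you yourself flag as ``the main obstacle'' is a genuine gap, and neither of your two suggested resolutions works. First, the claim that ``$T$ itself extends to a bounded operator on all of $\Free(M)$ as a uniform limit on $W$'' is unjustified: uniform convergence of the $T_k$ on the norm-compact set $W$ does not give convergence in operator norm, so the limit need not be bounded even on $\overline{\lspan}(W)$, and in any case bounded operators do not extend from subspaces (there is no Hahn--Banach theorem for operators with values in $\Free(B)$). Second, the multiplication-operator fix fails for a more basic reason: the range of $T_h^\ast$ lies in $\Free(N)$ for $N\supset\supp(h)$, and $\supp(h)$ necessarily contains a whole neighbourhood of $K$ in $M$, whereas the hypothesis only provides a set $B$ with $K\subset B$ --- $B$ need not contain any neighbourhood of $K$, so there is no way to force the range of $T_h^\ast\circ T_k$ into $\Free(B)$. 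In short, you cannot in general manufacture a bounded operator $\Free(M)\to\Free(B)$ approximating the identity on $W$, which is what your composition $\iota\circ R\circ T'$ requires.

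The paper sidesteps this by extending the \emph{other} factor: since the operator $H:\Free(B)\to\Free(B)$ furnished by the AP has finite rank, it can be written as $\sum_{i=1}^n f_i\otimes\nu_i$ with $f_i\in\Free(B)^\ast$, and each $f_i$ extends by Hahn--Banach to $\Free(M)^\ast$, giving a finite-rank $\widetilde H:\Free(M)\to\Free(M)$ that agrees with $H$ on $\Free(B)$. One then sets $S=\widetilde H T_n$ for $n$ large enough that $\sup_{w\in W}\|T_nw-Tw\|\leq\varepsilon\|\widetilde H\|^{-1}$; the three-term estimate
\[
\|Sw-w\|\leq\|\widetilde H T_nw-\widetilde H Tw\|+\|\widetilde H Tw-Tw\|+\|Tw-w\|\leq 3\varepsilon
\]
then closes the argument, using that $Tw\in\Free(K)\subset\Free(B)$ so that $\widetilde H Tw=HTw$. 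No globally defined operator into $\Free(B)$ is ever needed. If you replace your construction of $T'$ by this Hahn--Banach extension of the finite-rank operator, the rest of your proof goes through as written.
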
 
\begin{proof}
\ One direction is clear, let us prove the other one.
Let $W \subset \Free(M)$ be a norm-compact set.
Let $\varepsilon>0$, we will find a finite rank operator $S:\Free(M)\to \Free(M)$ such that $\sup_{w\in W}\norm{Sw-w}\leq 3\varepsilon$.
So let $K$, $T:W \to \Free(K)$ and $(T_n)$ be given by Theorem~\ref{thm:WeaklyPrecompactIsTight}.

Since $T\restricted_W$ is the uniform limit of continuous maps, it is continuous. Thus $T(W) \subset \Free(K)$ is norm compact.

By assumption there exists $B\subset M$ such that $K\subset B$ and $\Free(B)$ has the AP. 
So there exists a bounded finite-rank operator $H:\Free(B) \to \Free(B)$ such that $\sup_{w\in W}\norm{HTw-Tw}\leq \varepsilon$.
Let $\widetilde{H}:\Free(M)\to \Free(M)$ be a Hahn-Banach finite-rank extension of $H$.
We also know by the uniform convergence of $(T_n)$ to $T$ on $W$ that  there is $n$ such that $\sup_{w \in W}\norm{T_nw-Tw}\leq \varepsilon \norm{\widetilde{H}}^{-1}$.
We set $S=\widetilde{H}T_n$ which is bounded and has finite rank. For every $w\in W$ we have
\[
\begin{aligned}
\norm{Sw-w}&\leq \norm{\widetilde{H}T_nw-\widetilde{H}Tw}+\norm{\widetilde{H}Tw-Tw}+\norm{Tw-w}\\ 
&\leq \norm{\widetilde{H}}\norm{T_nw-Tw}+\varepsilon+\varepsilon\leq 3\varepsilon
\end{aligned}
\]
which we wanted to prove.
\end{proof}

Notice that the difference between the statement of
Corollary~\ref{c:CompactReductionWSC} (or Corollary~\ref{c:CompactReductionSchur}) and our last corollary is necessary since the AP does not pass to subspaces. For instance, consider a Banach space $X \subset c_0$ failing the AP
such as Enflo's example \cite{Enflo_AP}.
It follows from \cite[Theorem 5.3]{GoKa_2003} that $\Free(c_0)$ has the AP. However, there is a compact convex subset $K \subset X$ such that $X$ is isometric to a 1-complemented subspace of $\Free(K)$ and thus $\Free(K)$ fails the AP (see \cite[Theorem 4]{GodefroyOzawa}).

Since free spaces over countable compact spaces have the AP \cite[Theorem 3.1]{Dalet_2015}, we obtain in particular:

\begin{corollary}
Let $M$ be countable and complete metric space. 
Then $\Free(M)$ has the AP.
More generally, let $M$ be a scattered complete metric space. 
Then $\Free(M)$ has the AP.
\end{corollary}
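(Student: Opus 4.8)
The plan is to deduce this directly from the compact reduction principle for the AP (Corollary~\ref{c:CompactReductionAP}) together with Dalet's theorem \cite[Theorem 3.1]{Dalet_2015}, which asserts that $\Free(K)$ has the AP whenever $K$ is a countable compact metric space. The structure mirrors the proof of Corollary~\ref{c:FCountableCompletIsSchur}.

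First I would treat the countable case. Suppose $M$ is countable and complete, and fix an arbitrary compact $K\subset M$. Since $M$ is countable, so is $K$; replacing $K$ by $K\cup\{0\}$ (which remains countable and compact) we may assume $0\in K$, so that $\Free(K)$ is a well-defined subspace of $\Free(M)$. By Dalet's theorem $\Free(K)$ has the AP. Thus the criterion in Corollary~\ref{c:CompactReductionAP} is met with the choice $B=K$, for every compact $K\subset M$, and hence $\Free(M)$ has the AP.

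Next I would handle the scattered case, which in fact subsumes the previous one. If $M$ is scattered and complete, then every subset of $M$ is scattered, so every compact $K\subset M$ is a scattered compact metric space; by Lemma~VI.8.2 in \cite{DGZ} such a space is necessarily countable. Therefore every compact $K\subset M$ is countable and we conclude exactly as above, again enlarging $K$ to $K\cup\{0\}$ if needed so that $\Free(K)$ sits inside $\Free(M)$. (Alternatively, since the perfect kernel of a metric space is either empty or uncountable, a countable complete metric space is automatically scattered, so the first statement is literally a special case of the second.)

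I do not expect any genuine obstacle here: the two nontrivial inputs — Corollary~\ref{c:CompactReductionAP} and the AP of free spaces over countable compacta — are already in hand, and the only points requiring a moment's care are the base-point bookkeeping (handled by passing from $K$ to $K\cup\{0\}$) and the classical fact that a scattered compact metric space is countable.
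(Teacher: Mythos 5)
Your argument is correct and matches the paper's intended proof exactly: the paper derives this corollary in one line from Corollary~\ref{c:CompactReductionAP} (taking $B=K$) combined with Dalet's theorem, and handles the scattered case via the same fact from \cite{DGZ} that scattered compact metric spaces are countable, as in the proof of Corollary~\ref{c:FCountableCompletIsSchur}. The base-point bookkeeping you mention is a harmless refinement that the paper leaves implicit.
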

Notice that the above corollary gives a  positive answer to the Banach space case of  \cite[Question 6.3]{AACD20}.

\subsection{Compact reduction for the Dunford--Pettis property}

A Banach space $X$ is said to have the \emph{Dunford--Pettis property (DPP)} if for every sequence $(x_n)_n$ in $X$ converging weakly to 0 and every sequence $(x_n^*)_n$ in $X^*$ converging weakly to $0$, the sequence of scalars $(x_n^*(x_n))_n$ converges to 0. Note that this property does not pass to subspaces.

\begin{corollary}\label{c:CompactReductionDPP}
Let $M$ be complete. 
Then $\Free(M)$ has the DPP if and only if for every compact $K\subset M$ there is a subset $B\subset M$ such that $K\subset B$ and $\Free(B)$ has the DPP.
\end{corollary}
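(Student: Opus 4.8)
The plan is to mimic closely the proof of Corollary~\ref{c:CompactReductionAP}, using Theorem~\ref{thm:WeaklyPrecompactIsTight} to ``push'' weakly null sequences into a free space $\Free(B)$ with the DPP, while being careful about the fact that the DPP involves \emph{both} a weakly null sequence in the space and a weakly null sequence in the dual. One direction is again trivial: if $\Free(M)$ has the DPP, take $B=M$. For the nontrivial direction, suppose $(\mu_n)_n$ is weakly null in $\Free(M)$ and $(f_n)_n$ is weakly null in $\Lip_0(M)=\Free(M)^*$; we must show $\duality{f_n,\mu_n}\to 0$. Assume not, so after passing to a subsequence $\abs{\duality{f_n,\mu_n}}\geq\eta>0$ for all $n$.

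First I would apply Theorem~\ref{thm:WeaklyPrecompactIsTight} to the weakly precompact (indeed relatively weakly compact) set $W=\set{\mu_n:n\in\Natural}$ with $\varepsilon$ small compared to $\eta$ (say $\varepsilon<\eta/4$): this yields a compact $K\subset M$, a linear map $T:\lspan(W)\to\Free(K)$ with $\norm{\mu_n-T\mu_n}\leq\varepsilon$ for all $n$, and bounded operators $T_k:\Free(M)\to\Free(M)$ with $T_k\to T$ uniformly on $W$. By hypothesis there is $B\subset M$ with $K\subset B$ and $\Free(B)$ having the DPP. Now I would argue that $(T\mu_n)_n$ is weakly null in $\Free(B)$: indeed for fixed $f\in\Lip_0(M)$, the exchange-of-limits argument already used in the proof of Corollary~\ref{c:CompactReductionWSC} (using $T_k\to T$ uniformly on $W$ and $\duality{f,T_k\mu_n}\to 0$ as $n\to\infty$ for each fixed $k$, since $T_k$ is weak-weak continuous and $\mu_n\wconv 0$) shows $\duality{f,T\mu_n}\to 0$; since every element of $\Free(B)^*=\Lip_0(B)$ extends to an element of $\Lip_0(M)$, this gives $T\mu_n\wconv 0$ in $\Free(B)$.

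Next I need a weakly null sequence in $\Free(B)^*=\Lip_0(B)$ to pair against $(T\mu_n)_n$. The natural candidate is $g_n:=T_{h}^*\!\!{}^*$-type restriction of $f_n$ — more precisely, I would use the multiplication/restriction machinery recalled in the introduction: there is a norm-one (or uniformly bounded) ``restriction-like'' operator $R:\Lip_0(M)\to\Lip_0(B)$ which is the adjoint of the canonical inclusion $\Free(B)\hookrightarrow\Free(M)$, so $R$ is weak-weak continuous and $Rf_n\wconv 0$ in $\Lip_0(B)$; moreover $\duality{Rf_n,T\mu_n}_{\Free(B)}=\duality{f_n,T\mu_n}_{\Free(M)}$ because $T\mu_n\in\Free(B)$. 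Applying the DPP of $\Free(B)$ to $(T\mu_n)_n$ and $(Rf_n)_n$ gives $\duality{f_n,T\mu_n}\to 0$. Finally $\abs{\duality{f_n,\mu_n}}\leq\abs{\duality{f_n,T\mu_n}}+\norm{f_n}\,\norm{\mu_n-T\mu_n}\leq\abs{\duality{f_n,T\mu_n}}+(\sup_n\norm{f_n})\varepsilon$, and since $\varepsilon$ was chosen $<\eta/4$ while $(\norm{f_n})_n$ is bounded (weakly null hence norm-bounded), for large $n$ the right side is $<\eta$, contradicting $\abs{\duality{f_n,\mu_n}}\geq\eta$.

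The step I expect to be the main obstacle is confirming that the ``restriction of a weakly null sequence in $\Lip_0(M)$ stays weakly null in $\Lip_0(B)$'' — i.e. that the canonical inclusion $\Free(B)\hookrightarrow\Free(M)$ has a weak-weak continuous adjoint with the right behaviour — and, relatedly, checking that the pairing $\duality{Rf_n,T\mu_n}$ really equals $\duality{f_n,T\mu_n}$. Both follow formally from the isometric identification of $\Free(B)$ with a subspace of $\Free(M)$ (via McShane–Whitney) recalled in the Notation section, so no genuinely new idea is needed; the care is only in not conflating the duals $\Lip_0(M)$ and $\Lip_0(B)$ and in keeping track of which pairing is being used. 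Everything else is a routine repetition of the double-limit argument from Corollary~\ref{c:CompactReductionWSC} together with the triangle-inequality estimate from Corollary~\ref{c:CompactReductionAP}.
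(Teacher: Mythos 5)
Your proposal is correct and follows essentially the same route as the paper: apply Theorem~\ref{thm:WeaklyPrecompactIsTight} to $W=\set{\mu_n:n\in\Natural}$, transfer both sequences to $\Free(B)$ (the paper simply uses the restrictions $f_n\restricted_B$, which is exactly your adjoint-of-inclusion operator $R$), invoke the DPP of $\Free(B)$, and finish with the triangle inequality. The only nit is that your fixed choice $\varepsilon<\eta/4$ should also account for the bound $C=\sup_n\lipnorm{f_n}$ (take, say, $\varepsilon<\eta/(2(C+1))$, which is implicit in the paper's ``$\varepsilon$ arbitrary'' formulation); this is cosmetic.
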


\begin{proof}
\ The direction ``$\implies$" is trivial, so let us prove the other one. Let $(\mu_n)_n \subset \Free(M)$  and $(f_n)_n \subset \Lip_0(M)$ be weakly-null (so bounded by some constant $C>0$).
We set $W:=\set{\mu_n:n \in \Natural}$ and let $\varepsilon>0$ be arbitrary.
Let $K$ and $T$ be as in Theorem~\ref{thm:WeaklyPrecompactIsTight} with 
$\norm{T\mu_n-\mu_n}\leq \varepsilon$. 
In the same way as in the proof of Corollary~\ref{c:CompactReductionWSC}, we obtain that $(T \mu_n)_n$ is weakly-null.
By assumption, there exists $B \subset M$ such that $K \subset B$ and $\Free(B)$ has the DPP. Since $(T \mu_n)_n$ is weakly-null in $\Free(K)$ and thanks to the canonical isometric identification $\Free(K) \subset \Free(B)$, $(T\mu_n)_n$ is weakly null in $\Free(B)$.
Moreover the sequence $(f_n\restricted_{B})_n \subset \Lip_0(B)$ is also weakly-null. Therefore $\lim_n \duality{f_n,T\mu_n}=0$. Now let $n_0 \in \N$ such that $\abs{\duality{f_n,T\mu_n}} \leq \varepsilon$ whenever $n \geq n_0$. Using the triangle inequality we deduce that for every $n \geq n_0$:
$$ \abs{\duality{f_n,\mu_n}} \leq \abs{\duality{f_n,\mu_n-T\mu_n}}+\abs{\duality{f_n,T\mu_n}} \leq \|f_n\|_L \|\mu_n-T\mu_n\| + \varepsilon  \leq (C+1)\varepsilon.$$
Since $\varepsilon>0$ was arbitrary, this yields the conclusion. 
\end{proof}

\subsection{Compact reduction for copies of spaces not containing \texorpdfstring{$\ell_1$}{l1}}

\begin{theorem}\label{thm:CompactReductionNonEll1}
Let $M$ be a complete metric space. 
Let $X \subseteq \Free(M)$ be a closed subspace which does not contain an isomorphic copy of $\ell_1$.
Then there exists a compact $K\subset M$ such that $X$ is isomorphic to a subspace of $\Free(K)$.
\end{theorem}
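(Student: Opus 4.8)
The plan is to feed the unit ball of $X$ into Theorem~\ref{thm:WeaklyPrecompactIsTight}. First I would record that $B_X$ is weakly precompact: it is bounded, and since $X$ contains no isomorphic copy of $\ell_1$, no sequence in $B_X$ is equivalent to the unit vector basis of $\ell_1$; hence by Rosenthal's $\ell_1$-theorem (the equivalent formulation in Definition~\ref{def:WeakPrecompact}), every sequence in $B_X$ has a weakly Cauchy subsequence.

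Next, fix $\varepsilon \in (0,1)$, for instance $\varepsilon = \tfrac12$, and apply Theorem~\ref{thm:WeaklyPrecompactIsTight} to $W = B_X$. This yields a compact $K \subset M$ and a linear map $T : \lspan(B_X) \to \Free(K)$ with $\norm{\mu - T\mu} \le \varepsilon$ for every $\mu \in B_X$. Since $\lspan(B_X) = X$, the map $T$ is defined on all of $X$. By linearity of $T$ and homogeneity of the norm, the estimate on $B_X$ upgrades to $\norm{\mu - T\mu} \le \varepsilon\norm{\mu}$ for all $\mu \in X$; in particular $\norm{T\mu} \le (1+\varepsilon)\norm{\mu}$, so $T$ is bounded, and $\norm{T\mu} \ge (1-\varepsilon)\norm{\mu}$, so $T$ is bounded below. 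Thus $T$ is an isomorphic embedding of $X$ into $\Free(K)$, i.e. $X$ is isomorphic to the closed subspace $T(X)$ of $\Free(K)$, which is the claim. (The sequence $(T_k)$ and the uniform convergence $T_k \to T$ are not needed here; they will matter in the arguments that require preserving weak-type properties, not in this purely isomorphic statement.)

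\textbf{Main obstacle.} There is essentially no serious obstacle once Theorem~\ref{thm:WeaklyPrecompactIsTight} is in hand: the whole content of the theorem is packaged there. The only points needing a word of care are the passage from ``contains no $\ell_1$'' to weak precompactness of $B_X$ via Rosenthal's theorem, the observation that $\lspan(B_X)=X$ so that $T$ is globally defined on $X$, and the standard fact that a linear map which is $\varepsilon$-close to the identity on the unit ball (with $\varepsilon<1$) is an isomorphism onto its range.
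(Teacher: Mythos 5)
Your proposal is correct and follows essentially the same route as the paper: apply Theorem~\ref{thm:WeaklyPrecompactIsTight} to $W=B_X$ with $\varepsilon<1$, note $\lspan(B_X)=X$, and conclude that $T$ is an isomorphic embedding into $\Free(K)$. The only (harmless) difference is that you obtain boundedness of $T$ directly from $\norm{\mu-T\mu}\le\varepsilon\norm{\mu}$ by homogeneity, whereas the paper invokes the uniform convergence $T_k\to T$ on $B_X$; both work.
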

\begin{proof}
\ We apply Theorem~\ref{thm:WeaklyPrecompactIsTight} for $W=B_X$ and $\varepsilon<\frac12$.
Notice that $\lspan(B_X)=X$.
Since $T\restricted_{B_X}$ is a uniform limit of bounded operators, it follows that $T\restricted_X$ is a bounded linear operator.
On the other hand we have for every $x\in \sphere{X}$ that $\norm{Tx}\geq 1-\varepsilon$.
It follows that $T:X \to \Free(K)$ is an isomorphism between $X$ and a subspace of $\Free(K)$.
\end{proof}

We get immediately the following compact-reduction result.

\begin{corollary}\label{c:CompactReductionEll1Saturated}
Let $M$ be an infinite complete metric space. Assume that for every infinite compact $K \subset M$, the space $\Free(K)$ is $\ell_1$-saturated (i.e. every infinite-dimensional subspace of $\Free(K)$ contains an isomorph of $\ell_1$). 
Then $\Free(M)$ is $\ell_1$-saturated.
\end{corollary}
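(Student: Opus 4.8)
The plan is a short proof by contradiction resting entirely on Theorem~\ref{thm:CompactReductionNonEll1}. Suppose $\Free(M)$ fails to be $\ell_1$-saturated. Then there is an infinite-dimensional closed subspace $X\subseteq\Free(M)$ containing no isomorphic copy of $\ell_1$, and I would feed exactly this $X$ into Theorem~\ref{thm:CompactReductionNonEll1}.

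The theorem provides a compact $K\subset M$ together with an isomorphism of $X$ onto a closed subspace $Y$ of $\Free(K)$; note that $Y$ is infinite-dimensional since $X$ is. First I would observe that $K$ must be infinite: if $K$ had finitely many points then $\Free(K)$ would be finite-dimensional, which cannot contain the infinite-dimensional subspace $Y$. Hence $K$ is an infinite compact subset of $M$, so the standing hypothesis applies and $\Free(K)$ is $\ell_1$-saturated. Consequently the infinite-dimensional subspace $Y\subseteq\Free(K)$ contains an isomorphic copy of $\ell_1$; pulling it back along the isomorphism $X\cong Y$, so does $X$, contradicting the choice of $X$. Therefore every infinite-dimensional subspace of $\Free(M)$ contains $\ell_1$, i.e.\ $\Free(M)$ is $\ell_1$-saturated.

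I do not expect any genuine obstacle here: the substance has already been isolated in Theorem~\ref{thm:WeaklyPrecompactIsTight} and its consequence Theorem~\ref{thm:CompactReductionNonEll1}. The only point requiring a word of care is the elementary verification that the compact set delivered by the theorem is infinite, which is precisely what allows the $\ell_1$-saturation hypothesis on infinite compact subsets to be invoked; everything else is a formal manipulation of isomorphic copies.
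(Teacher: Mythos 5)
Your proof is correct and is exactly the argument the paper intends: the corollary is stated as an immediate consequence of Theorem~\ref{thm:CompactReductionNonEll1}, and you have filled in the only details worth mentioning (passing to a closed infinite-dimensional subspace without $\ell_1$ and checking that the resulting compact $K$ must be infinite, since otherwise $\Free(K)$ would be finite-dimensional).
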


\subsection{Consequences for non-separable free spaces}

The next result, which  follows immediately from Theorem~\ref{thm:WeaklyPrecompactIsTight}, is known.
\begin{corollary}\label{c:WeaklyPrecompactAreSeparable}
Let $W \subset \Free(M)$ be weakly precompact. Then $W$ is separable.
\end{corollary}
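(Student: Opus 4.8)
The plan is to deduce this directly from the "more precise" part of Theorem~\ref{thm:WeaklyPrecompactIsTight}. Fix a weakly precompact set $W \subset \Free(M)$. For each $n \in \Natural$, apply Theorem~\ref{thm:WeaklyPrecompactIsTight} with $\varepsilon = 1/n$ to obtain a compact set $K_n \subset M$ and a linear map $T_n : \lspan(W) \to \Free(K_n)$ with $\norm{\mu - T_n\mu} \leq 1/n$ for all $\mu \in W$. Each $\Free(K_n)$ is separable, since $K_n$ is a compact (hence separable) metric space and $\Free$ of a separable metric space is separable (the rational convex combinations of the $\delta(x)$ over a countable dense subset of $K_n$ are dense). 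Hence $T_n(W)$ is contained in a separable subspace of $\Free(M)$.

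Now I would argue that $W$ lies in the closure of $\bigcup_n T_n(W)$, which is separable as a countable union of separable sets. Indeed, for any $\mu \in W$ we have $\dist(\mu, \bigcup_n T_n(W)) \leq \norm{\mu - T_n\mu} \leq 1/n$ for every $n$, so $\mu$ belongs to the closure of $\bigcup_n T_n(W)$. Therefore $W$ is contained in a separable subset of $\Free(M)$, and is thus itself separable (a subset of a separable metric space is separable).

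I do not expect any real obstacle here; the statement is essentially immediate once one has the approximating maps $T_n$ landing in free spaces over compacta, together with the elementary facts that compact metric spaces have separable free spaces and that subsets of separable metric spaces are separable. One could also phrase the argument without the maps $T_n$ at all, using only tightness: for each $n$ pick a compact $K_n$ with $W \subset \Free(K_n) + \tfrac1n \ball{\Free(M)}$, note $\Free(K_n)$ is separable, and observe $W \subset \overline{\bigcup_n \Free(K_n)}$ in norm. Either route gives the conclusion; the version using $T_n$ is stated only to keep the proof uniform with the other corollaries in this section.
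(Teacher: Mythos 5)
Your argument is correct and is exactly the intended justification: the paper gives no written proof, stating only that the corollary ``follows immediately'' from Theorem~\ref{thm:WeaklyPrecompactIsTight}, and your two routes (via the maps $T_n$ or via tightness alone) both fill in the obvious details correctly, using that $\Free(K)$ is separable for compact $K$ and that separability is hereditary in metric spaces. Nothing further is needed.
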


This fact has been first observed for weakly compact subsets of free spaces over weakly compactly generated Banach spaces in~\cite[Proposition 4.1]{GoKa_2003} and later proved for weakly precompact subsets of free spaces over arbitrary metric spaces in~\cite[Theorem 2.1]{Kalton_2011}.

We also get easily that some well known (classes of) non-separable Banach spaces do not appear as subspaces of free spaces.
Consequently, they fail the \emph{Lipschitz lifting property} introduced by Godefroy and Kalton in \cite{GoKa_2003}. Recall that a Banach space $X$ is said to have this property if the map
$$\beta_X\colon \sum_{i=1}^n a_i \delta(x_i) \in \Free(X) \longmapsto \sum_{i=1}^n a_i x_i \in  X$$
admits a bounded linear right inverse.
It was already known that cases (ii) and (iii) below fail this property (see \cite[Theorems 4.3 and 4.6]{GoKa_2003}) but the proof here is different.

\begin{corollary}
The following non-separable Banach spaces are not isomorphic to a subspace of any free space:
\begin{itemize}
    \item[(i)] \ all non-separable spaces not containing a copy of $\ell_1$ (e.g. Asplund spaces, the dual $JT^*$ of the James tree space, etc.),
    \item[(ii)] \ non-separable weakly compactly generated (WCG) spaces,
    \item[(iii)] \ $\ell_\infty$.
\end{itemize}
In particular, all these spaces fail the Lipschitz lifting property. And also, the free spaces of all these spaces fail 
to have unique Lipschitz structure.
\end{corollary}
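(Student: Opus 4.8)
The plan is to apply Theorem~\ref{thm:WeaklyPrecompactIsTight} in each case through a reductio: assume $X$ embeds as a subspace of some $\Free(M)$ and derive a contradiction with the known fact (established in Corollary~\ref{c:WeaklyPrecompactAreSeparable}, or rather its underlying mechanism) that weakly precompact subsets of free spaces are separable. For case (i), if $X$ does not contain $\ell_1$ then by Rosenthal's $\ell_1$-theorem the unit ball $B_X$ is weakly precompact; viewing $B_X$ as a weakly precompact subset of $\Free(M)$, Corollary~\ref{c:WeaklyPrecompactAreSeparable} forces $B_X$, hence $X$, to be separable, contradicting non-separability. Alternatively one can invoke Theorem~\ref{thm:CompactReductionNonEll1} directly: $X$ would be isomorphic to a subspace of $\Free(K)$ for some compact $K$, but $\Free(K)$ is separable whenever $K$ is compact (it is the closed span of the separable set $\{\delta(x):x\in K\}$), so $X$ is separable — again a contradiction. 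I would present this version since it is cleanest. The parenthetical examples require only the standard facts that Asplund spaces contain no copy of $\ell_1$ (their duals are separable on separable subspaces, so $\ell_1^*$ non-separability rules out $\ell_1\subseteq X$), and that $JT^*$ is a classical Asplund-type space not containing $\ell_1$; these I would cite rather than prove.

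For case (ii), a non-separable WCG space $X$ contains, by the Amir--Lindenstrauss theory, a non-separable reflexive (hence $\ell_1$-free) subspace $Y$ — concretely, if $K\subset X$ is a weakly compact generating set, one extracts a suitable long basic sequence or uses that $X$ admits a bounded injection into some $c_0(\Gamma)$ whose restriction to a large subspace is an isomorphism onto a reflexive space. Then $Y$ is non-separable and contains no $\ell_1$, so case (i) applies to $Y\subseteq X\subseteq\Free(M)$, contradiction. For case (iii), $\ell_\infty$ does contain $\ell_1$, so (i) does not apply directly; instead I would use that $\ell_\infty$ is isomorphic to $\ell_\infty(\Gamma)$ for uncountable $\Gamma$ and contains the non-separable $\ell_1$-free subspace $c_0(\Gamma)$ (indeed $c_0(\mathfrak{c})\hookrightarrow\ell_\infty$), reducing once more to case (i). The final two sentences are formal: the Lipschitz lifting property of a Banach space $X$ implies, via the Godefroy--Kalton construction, that $X$ is linearly isomorphic to a complemented — in particular, a — subspace of $\Free(X)$, so failure of ``$X\subseteq$ some free space'' immediately yields failure of the lifting property; and failure of the lifting property is exactly what obstructs unique Lipschitz structure in the sense of \cite{GoKa_2003}, so that conclusion is automatic.

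The main obstacle is case (ii): producing a non-separable $\ell_1$-free subspace inside an arbitrary non-separable WCG space is not completely trivial and relies on nontrivial structure theory (Amir--Lindenstrauss, projectional resolutions of the identity, or the existence of a linearly dense weakly compact set together with a Markushevich basis). I would handle it by citing the standard fact that every non-separable WCG space has a non-separable subspace which is itself WCG with a strictly convex or reflexive-like renorming sufficient to exclude $\ell_1$ — or, more economically, by noting that a non-separable WCG space maps injectively and $w$-$w$ continuously into $c_0(\Gamma)$ and choosing an uncountable ``almost isometric'' subsystem to land inside a copy of $c_0$, which contains no $\ell_1$. Everything else is bookkeeping around Theorem~\ref{thm:WeaklyPrecompactIsTight} and the separability of free spaces over compacta.
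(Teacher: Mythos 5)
Your treatment of (i) matches the paper (it is exactly an application of Theorem~\ref{thm:CompactReductionNonEll1} plus separability of $\Free(K)$ for compact $K$), your alternative for (iii) via $c_0(\mathfrak{c})\hookrightarrow\ell_\infty$ is a legitimate substitute for the paper's route (which instead embeds $JT^*$ into $\ell_\infty$ as the dual of a separable space), and the deductions about the lifting property and Lipschitz structure are essentially the Godefroy--Kalton decomposition argument the paper uses, though you should state the last one carefully: what is needed is that $\Free(X)$ is Lipschitz equivalent to $\ker\beta_X\oplus X$ while $\ker\beta_X\oplus X$ cannot be \emph{linearly} isomorphic to $\Free(X)$ because $X$ embeds into the latter sum but into no free space; ``failure of the lifting property'' alone is not literally equivalent to failure of unique Lipschitz structure.

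The genuine gap is in (ii). Your plan is to reduce a non-separable WCG space to case (i) by locating inside it a non-separable subspace not containing $\ell_1$, and the concrete form you propose --- a non-separable \emph{reflexive} subspace $Y$ --- is false in general: $c_0(\Gamma)$ with $\Gamma$ uncountable is non-separable and WCG, yet it is $c_0$-saturated, so every infinite-dimensional closed subspace contains $c_0$ and in particular no infinite-dimensional subspace is reflexive. The weaker statement (a non-separable $\ell_1$-free subspace) is not something you prove, and your fallback sketches do not close it: a bounded injection of $X$ into $c_0(\Gamma)$ need not be an isomorphism on any non-separable subspace, and ``landing inside a copy of $c_0$'' cannot produce anything non-separable. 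More importantly, this whole detour is unnecessary. The paper's argument is a one-liner that uses the WCG structure directly rather than through case (i): if $X\subseteq\Free(M)$ is WCG, take a weakly compact $W\subset X$ with $\overline{\lspan}(W)=X$; weakly compact sets are weakly precompact (Eberlein--\v{S}mulian), so Corollary~\ref{c:WeaklyPrecompactAreSeparable} forces $W$ to be separable, hence $X=\overline{\lspan}(W)$ is separable, contradicting non-separability. You should replace your argument for (ii) by this one.
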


\begin{proof}
\ (i) follows directly from Theorem~\ref{thm:CompactReductionNonEll1}. 
The fact that $JT^*$ does not contain a copy of $\ell_1$ is well known and is proved for instance in~\cite[Corollary 3.c.7]{JamesForest}.

(ii) If $X\subseteq \Free(M)$ is non-separable and WCG, there is a weakly compact $W \subset X$ such that $\overline{\lspan}(W)=X$. It follows that $W$ is non-separable, contrary to Corollary~\ref{c:WeaklyPrecompactAreSeparable}.

(iii) $\ell_\infty$ contains an isometric copy of the dual of every separable Banach space, in particular contains $JT^*$.

Finally, if a Banach space $X$ satisfies the Lipschitz lifting property then $\Free(X)$ contains an isomorphic copy of $X$.
This proves the second-to-last statement. 

Also, let $\beta_X:\Free(X) \to X$ be the linear extension of $Id_X:X\to X$, i.e. $\beta_X\circ \delta_X=Id_X$. 
Then it is well known and easy to see that $\Free(X)$ is Lipschitz equivalent to $\ker \beta_X\oplus X$ \cite{GoKa_2003}. But, because of the first part of the corollary, $\ker \beta_X\oplus X$ cannot be linearly isomorphic to $\Free(X)$. 
This proves the last statement.
\end{proof}

\section{Proof of Theorem~\ref{thm:WeaklyPrecompactIsTight}}\label{s:Proof}

We will in fact prove a (perhaps only formally) more general theorem, and then show that Theorem~\ref{thm:WeaklyPrecompactIsTight} follows from it. For the general result we need the following notion.

\begin{definition}\label{def:KaltonProperty}
Let us say that a set $W\subset\lipfree{M}$ has \emph{Kalton's property} if it is such that for every $\varepsilon,\delta>0$ there exists a finite set $E\subset M$ such that \[W\subset\lipfree{[E]_\delta}+\varepsilon\ball{\lipfree{M}}\]
where $[E]_\delta=\set{x \in M: d(x,E)\leq \delta}$.
\end{definition}
In~\cite[Lemma 4.5]{Kalton04} Kalton proved that weakly null sequences in free spaces over bounded metric spaces satisfy this property -- hence the terminology. It is clear that tightness implies Kalton's property. The next theorem shows that they are in fact equivalent.

\begin{theorem} \label{thm:KaltonPropertyToCompact}
Let $W \subset \lipfree{M}$ have Kalton's property.
Then $W$ is tight.

More precisely, for every $\varepsilon>0$ there exist a compact $K\subset M$ and a linear map $T:\lspan(W)\to\Free(K)$ such that
\begin{itemize}
    \item \ $\norm{\mu-T\mu}\leq \varepsilon$ for all $\mu\in W$, and
    \item \ there is a sequence of bounded linear operators $T_k:\Free(M)\rightarrow\Free(M)$ such that $T_k\rightarrow T$ uniformly on $W$.
\end{itemize}
\end{theorem}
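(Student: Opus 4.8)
The plan is to construct $K$, $T$ and the operators $T_k$ by a recursion in which Kalton's property is applied not to $W$ itself but to successive \emph{truncations} of $W$ living in smaller and smaller pieces of $M$; this is the device that keeps the approximation errors from accumulating.

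First I would set $M_0=M$ and $W_0=W$, and carry out the following recursive step. Suppose we are given a closed (hence complete) set $M_{n-1}\subseteq M$ and a set $W_{n-1}\subseteq\Free(M_{n-1})$ that has Kalton's property \emph{in} $\Free(M_{n-1})$. Fix $\delta_n\in(0,2^{-n}]$ and apply Kalton's property to $W_{n-1}$ with parameters $(\varepsilon_n,\delta_n)$, where $\varepsilon_n>0$ will be chosen below; this gives a finite $E_n\subseteq M_{n-1}$, which I enlarge harmlessly so that $0\in E_n$, with $W_{n-1}\subseteq\Free([E_n]_{\delta_n})+\varepsilon_n\ball{\Free(M_{n-1})}$. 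Choose a Lipschitz $h_n$ on $M$ with $0\le h_n\le1$, $h_n\equiv1$ on $[E_n]_{\delta_n}$, $\supp(h_n)\subseteq[E_n]_{2\delta_n}$ and $\lipnorm{h_n}\le1/\delta_n$, and consider the multiplication operator $T_{h_n}$ of \eqref{mult_operator} and its adjoint $T_{h_n}^\ast$. Put $M_n:=[E_n]_{2\delta_n}\cap M_{n-1}$: it is closed, $M_n\subseteq M_{n-1}$, $0\in M_n$, and $\alpha(M_n)\le4\delta_n$ since $M_n$ lies in a union of $\cardinality{E_n}$ balls of radius $2\delta_n$. Using the elementary fact that $T_h^\ast$ sends $\lipfree{A}$ into $\lipfree{A\cap\supp(h)}$ for every closed $A$ (see \cite{APPP_2020}), one gets that $T_{h_n}^\ast$ maps $\Free(M_{n-1})$ into $\Free(M_n)$, and that $W_n:=T_{h_n}^\ast(W_{n-1})\subseteq\Free(M_n)$ again has Kalton's property in $\Free(M_n)$: apply Kalton to $W_{n-1}$ at a fine scale, push forward by $T_{h_n}^\ast$, intersect the approximating subspace with $M_n$ via the same support fact, and re-index the resulting finite set inside $M_n$. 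Now $\norm{T_{h_n}^\ast}\le\norm{h_n}_\infty+\rad(\supp(h_n))\lipnorm{h_n}<\infty$ is already determined by $E_n$ and $\delta_n$, so I can \emph{at this point} choose $\varepsilon_n$ so small that $(\norm{T_{h_n}^\ast}+1)\varepsilon_n\le\varepsilon2^{-n}$. Finally set $V_n:=T_{h_n}^\ast\circ\cdots\circ T_{h_1}^\ast\colon\Free(M)\to\Free(M_n)$, a bounded operator, so that $V_n(W)=W_n$.

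Next I would assemble the conclusions. The $M_n$ form a decreasing sequence of nonempty closed sets with $\alpha(M_n)\to0$, so by Kuratowski's theorem $K:=\bigcap_nM_n$ is nonempty and compact, and $\bigcap_n\Free(M_n)=\Free(K)$ because $\nu\in\Free(M_n)$ iff $\supp(\nu)\subseteq M_n$. For the error estimate fix $\mu\in W$ and $n\ge1$: since $V_{n-1}\mu\in W_{n-1}$ we may write $V_{n-1}\mu=\xi+r$ with $\xi\in\Free([E_n]_{\delta_n})$ and $\norm{r}\le\varepsilon_n$; because $h_n\equiv1$ on $\supp(\xi)$ we have $T_{h_n}^\ast\xi=\xi$, hence $\norm{V_n\mu-V_{n-1}\mu}=\norm{T_{h_n}^\ast r-r}\le(\norm{T_{h_n}^\ast}+1)\varepsilon_n\le\varepsilon2^{-n}$, \emph{uniformly in $\mu\in W$}. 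Summing, $(V_n\mu)_n$ is uniformly Cauchy on $W$ and $\norm{V_n\mu-\mu}\le\varepsilon$ for all $n$. Thus $T\mu:=\lim_nV_n\mu$ defines a linear map on $\lspan(W)$ with $\norm{T\mu-\mu}\le\varepsilon$ on $W$; since $V_m\mu\in\Free(M_m)\subseteq\Free(M_n)$ for $m\ge n$ and $\Free(M_n)$ is closed, $T\mu\in\bigcap_n\Free(M_n)=\Free(K)$; and $\sup_{\mu\in W}\norm{V_k\mu-T\mu}\le\varepsilon2^{-k}\to0$, so the bounded operators $T_k:=V_k$ converge to $T$ uniformly on $W$. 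This proves Theorem~\ref{thm:KaltonPropertyToCompact}. Theorem~\ref{thm:WeaklyPrecompactIsTight} follows once one knows that every weakly precompact subset of $\Free(M)$ has Kalton's property, which is the content of a suitable completion-based extension of Kalton's lemma~\cite[Lemma~4.5]{Kalton04}.

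The step I expect to be the main obstacle is exactly the one highlighted above: arranging the bookkeeping so that the error introduced at stage $n$ is governed by a \emph{freely choosable} small parameter $\varepsilon_n$ rather than by the accumulated error. A naive iteration that keeps applying Kalton's property to $W$ and cuts off with operators of norm $\sim1/\delta_n$ amplifies the running error by a factor $>1$ at each stage, which is fatal once $\delta_n\to0$ (and $\delta_n\to0$ is forced by the requirement $\alpha(M_n)\to0$ needed to invoke Kuratowski). Passing instead to the truncated sets $W_n$ and re-establishing Kalton's property for them inside the smaller spaces $\Free(M_n)$ is what breaks this circle, and the secondary technical point to be checked carefully is precisely that transfer: that the push-forward $T_{h_n}^\ast(\Free([E]_\delta))$ of a ``small'' approximating subspace is again contained in the free space of a bounded neighbourhood of a finite subset of $M_n$.
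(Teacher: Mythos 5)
Your construction is, in substance, the same iteration the paper uses: cut down to $2\delta_n$-neighbourhoods of finite sets via multiplication operators, transfer Kalton's property to the truncated sets $W_n$ through the support-preservation property of $T_{h_n}^\ast$ (your ``re-index the finite set inside $M_n$'' step is exactly the paper's auxiliary Claim), and invoke Kuratowski's theorem together with $\bigcap_n\Free(M_n)=\Free\bigl(\bigcap_n M_n\bigr)$. The error bookkeeping is also the right one. But there is one genuine gap, and it sits exactly where you placed your confidence: the claim that $\varepsilon_n$ can be chosen \emph{after} $\norm{T_{h_n}^\ast}$ is known. The set $E_n$ is an \emph{output} of applying Kalton's property with accuracy $\varepsilon_n$, and the bound $\norm{T_{h_n}^\ast}\le\norm{h_n}_\infty+\rad(\supp(h_n))\lipnorm{h_n}$ depends on $\rad(E_n)$, hence on $\varepsilon_n$. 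So ``apply Kalton with $\varepsilon_n$, read off $\norm{T_{h_n}^\ast}$, then choose $\varepsilon_n$'' is circular as written. For $n\ge2$ this is harmless, because $E_n\subset M_{n-1}$ and $M_{n-1}$ is already bounded, so $\rad(\supp(h_n))\le\rad(M_{n-1})+2\delta_n$ gives an a priori bound on $\norm{T_{h_n}^\ast}$ that is available \emph{before} $\varepsilon_n$ is fixed; you only need to state the choices in that order. The same repair works at $n=1$ when $M$ is bounded, since then $\rad(\supp(h_1))\le\rad(M)$.

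When $M$ is unbounded, however, the first step genuinely fails: $\rad(E_1)$ may grow without bound as $\varepsilon_1\to0$, and since you insist on a collar of width $\delta_1\le 1/2$ you get $\norm{T_{h_1}^\ast}\sim\rad(E_1)/\delta_1$, so no admissible $\varepsilon_1$ can be exhibited without circularity. The missing idea is a preliminary truncation with a \emph{wide} collar: take $h$ equal to $1$ on $B(0,R/2)$, zero off $B(0,R)$, with $\lipnorm{h}\le 2/R$, so that $\norm{T_h^\ast}\le 1+R\cdot\frac{2}{R}=3$ \emph{independently of $R$}. A single application of Kalton's property with $\delta=1$ fixes $R$, the truncation then costs at most $(1+3)\varepsilon/8$, and the whole iteration afterwards runs inside the bounded set $B(0,R)$, where your argument (with the quantifiers reordered as above) is correct. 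This is precisely how the paper splits the proof into a bounded case plus a one-step reduction to it; without that step your scheme does not close for unbounded $M$.
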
 

\begin{proof}

In the proof we will use the following fact.
 
\begin{claim*}
Let $N\subset M$ and suppose that $S:\lipfree{M}\rightarrow\lipfree{N}$ is a bounded operator such that $\supp(S\mu)\subset\supp(\mu)\cap N$ for every $\mu\in\lipfree{M}$. Then the image $S(W)$ has Kalton's property (relative to $N$).
\end{claim*}

\begin{proof}[Proof of the Claim]
Let $F\subset M$ be finite and $\delta>0$.
Then there is a finite $F' \subset N$ such that $[F]_\delta \cap N \subset [F']_{2\delta} \cap N$.
Indeed, for every $x\in F$ we pick $x' \in B(x,\delta)\cap N$ if it exists and let $F'$ be the collection of all chosen $x'$ .
It follows that
$$
S(\Free([F]_\delta))\subset \Free([F]_\delta \cap N) \subset \Free([F']_{2\delta}\cap N).
$$
So if $F\subset M$ and $\delta,\varepsilon>0$ are such that $W \subset \Free([F]_\delta)+\varepsilon B_{\Free(M)}$, then
$$
S(W) \subset \Free([F']_{2\delta}\cap N)+\norm{S}\varepsilon B_{\Free(N)}
$$
and the conclusion now easily follows.
\end{proof}

Let us first prove Theorem \ref{thm:KaltonPropertyToCompact} under the additional assumption that $\diam(M)=R<\infty$. We may suppose that $\varepsilon<1$. Let $\varepsilon_0=\varepsilon$, $\delta_0=R$, and for $n\geq 1$ denote $\varepsilon_n=2^{-n}\varepsilon$ and $\delta_n=R(\frac{1}{\varepsilon_n}-2)^{-1}$. Let $W_0=W$, $K_0=M$ and $S_0$ be the identity operator on $\lipfree{M}$. 
We will inductively construct sets $K_n\subset M$ and operators $S_n:\lipfree{K_{n-1}}\rightarrow\lipfree{K_n}$ for $n\geq 1$ such that $(K_n)_n$ is decreasing, $\alpha(K_n)\leq 4\delta_n$, each $S_n$ satisfies the hypothesis of the Claim, and $\norm{\mu-S_n\mu}\leq\varepsilon_n$ for every $\mu\in W_{n-1}$, where $W_n=T_n(W)$ and
$$T_n=S_n\circ\ldots\circ S_1.$$
Suppose $K_{n-1}$ and $S_{n-1}$ have been constructed. 
Since $W_{n-1}$ has Kalton's property (with respect to the metric space $K_{n-1}$) by hypothesis and by the Claim, there exists a finite set
$E_n \subset K_{n-1}$
such that
$$
W_{n-1} \subset \lipfree{[E_n]_{\delta_n}}+\varepsilon_n^2\ball{\lipfree{K_{n-1}}}.
$$
In the above we understand $[E_n]_{\delta_n}=\set{x\in K_{n-1}:d(x,E_n)\leq \delta_n}$ and similarly for $[E_n]_{2\delta_n}$ below.
Let $K_n=[E_n]_{2\delta_n}$, which clearly satisfies $\alpha(K_n)\leq 
4\delta_n$.
By the McShane-Whitney extension theorem (plus bounding above and below by $1$ and $0$),
there is a Lipschitz function $h_n$ on $K_{n-1}$ such that $0\leq h_n\leq 1$, $\lipnorm{h_n}\leq\frac{1}{\delta_n}$ and
$$ h_n(x) = \left\{\begin{array}{ccc}
    1 & \text{if} & x\in  [E_n]_{\delta_n}, \\
    0 & \text{if} &  x \in K_{n-1}\setminus K_n.
\end{array}\right. $$
Let $T_{h_n}:\Lip_0(K_n)\rightarrow\Lip_0(K_{n-1})$ the multiplication operator given by \eqref{mult_operator} for $h=h_n$, and let $S_n:\lipfree{K_{n-1}}\rightarrow\lipfree{K_n}$ be its preadjoint. 
Note that $\norm{S_n}\leq 1+R\lipnorm{h_n}$ by \eqref{mult_operator_norm}.
Clearly $S_n$ acts as the identity on $\lipfree{[E_n]_{\delta_n}}$ and its image is contained in $\lipfree{K_n}$.
Moreover $S_n$ satisfies the hypothesis of the Claim (see \cite[remark after Proposition 2.6]{APPP_2020}).
Finally, given $\mu\in W_{n-1}$ there exists $\lambda\in\lipfree{[E_n]_{\delta_n}}$ with $\norm{\mu-\lambda}\leq\varepsilon_n^2$, so we have
\begin{align*}
\norm{\mu-S_n\mu} &\leq \norm{\mu-\lambda}+\norm{\lambda-S_n\lambda}+\norm{S_n(\lambda-\mu)} \\
&\leq (1+\norm{S_n})\varepsilon_n^2 \leq (2+R\lipnorm{h_n})\varepsilon_n^2 \leq \varepsilon_n .
\end{align*}
This completes the construction.

For every $\mu\in W$, the sequence $(T_n\mu)_n$ is Cauchy by construction, hence it converges to some $\lambda\in\lipfree{M}$. Moreover
$$
\norm{\mu-T_n\mu} \leq \sum_{k=1}^n \norm{T_{k-1}\mu-T_k\mu} \leq \sum_{k=1}^n \varepsilon_k < \varepsilon
$$
and $\norm{\mu-\lambda}\leq\varepsilon$. 
We denote $K=\bigcap_{n=1}^\infty K_n$. 
Notice that $K$ is compact by Kuratowski's theorem~\cite{Kuratowski}.
Moreover $\supp(\lambda)\subset K$.
Indeed, $\lambda \in \bigcap_{n=1}^\infty\Free(K_n)=\Free(K)$ where the equality follows from \cite[Theorem 2.1]{APPP_2020}.

Now assume $M$ is unbounded. 
Then we precede the above construction by a preliminary step as follows: 
by Kalton's property, there is a finite set $E\subset M$ such that
$$
W \subset \lipfree{[E]_1}+\frac{\varepsilon}{8}\ball{\lipfree{M}} .
$$
Let $R=2(\rad(E)+1)$ and $K_0=B(0,R)$. 
Construct a Lipschitz function $h$ on $M$ with $0\leq h\leq 1$, $h=1$ on $B(0,R/2)$, $h=0$ on $M\setminus K_0$, and $\lipnorm{h}\leq\frac{2}{R}$.
Then similarly $\norm{T_h}\leq 3$ using \eqref{mult_operator_norm} and for any $\mu\in W$ we get
$$
\norm{\mu-T_h^\ast\mu}\leq (1+\norm{T_h})\frac{\varepsilon}{8}\leq\frac{\varepsilon}{2} .
$$
Since $T_h^\ast(W)\subset\lipfree{K_0}$ also has Kalton's property by the Claim, we can now apply the first part of the proof to obtain a compact $K \subset K_0$ such that $T_h^\ast(W)\subset\lipfree{K}+\frac{\varepsilon}{2}\ball{\lipfree{M}}$ with the corresponding operators $T_k:\Free(K_0)\to \Free(K_0)$ and the limit map $T:\Free(K_0)\to \Free(K)$. 
Clearly the operators $T_k\circ T_h^\ast$ and $T\circ T_h^\ast$ satisfy the requirements in the second half of the statement.
This ends the proof.
\end{proof}

In order to prove Theorem~\ref{thm:WeaklyPrecompactIsTight} it is now enough to show that weakly precompact sets in free spaces have Kalton's property.
This is achieved separately for bounded and unbounded metric spaces in the following couple of propositions, whose arguments are inspired by Kalton's original one from \cite[Lemma 4.5]{Kalton04}.

\begin{proposition}[Bounded case] \label{prop:generalKaltonLemma}
Let $M$ be bounded and let $W \subset \Free(M)$ be weakly precompact.
Then $W$ has Kalton's property.
\end{proposition}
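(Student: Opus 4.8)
The plan is to argue by contradiction and feed the situation into Kalton's original lemma for weakly null sequences, \cite[Lemma 4.5]{Kalton04}. Assume $M$ is bounded and $W$ is weakly precompact but fails Kalton's property; then there are $\varepsilon_0,\delta_0>0$ such that $W\not\subseteq\Free([E]_{\delta_0})+\varepsilon_0\ball{\Free(M)}$ for every finite $E\subseteq M$. Starting from $A_0=\set 0$, I would build inductively a "bad" sequence together with finite approximations: given the finite set $A_{n-1}$, use the failure to pick $\mu_n\in W$ with $\dist(\mu_n,\Free([A_{n-1}]_{\delta_0}))>\varepsilon_0$, then choose a finitely supported $\nu_n\in\Free(M)$ with $\norm{\mu_n-\nu_n}<2^{-n}$, and set $A_n=A_{n-1}\cup\supp(\nu_n)$. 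The set $A_\infty=\bigcup_nA_n$ is then a bounded subset of $M$, and every $\nu_n$ lies in $\Free(\cl{A_\infty})$.

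Next I would pass to a weakly null sequence. Since $W$ is weakly precompact, some subsequence $(\mu_{n_k})_k$ is weakly Cauchy, so $v_k:=\mu_{n_{k+1}}-\mu_{n_k}$ is weakly null; the companion sequence $\hat v_k:=\nu_{n_{k+1}}-\nu_{n_k}$ is finitely supported, lies in $\Free(\cl{A_\infty})$, satisfies $\norm{v_k-\hat v_k}\le 2^{-n_{k+1}}+2^{-n_k}$, and is therefore also weakly null. Writing $\mu_{n_{k+1}}=\hat v_k+\nu_{n_k}+(\mu_{n_{k+1}}-\nu_{n_{k+1}})$ and using $\nu_{n_k}\in\Free(A_{n_k})\subseteq\Free([A_{n_{k+1}-1}]_{\delta_0})$, one gets $\dist(\hat v_k,\Free([A_{n_{k+1}-1}]_{\delta_0}))\ge\varepsilon_0-2^{-n_{k+1}}>\varepsilon_0/2$ for all large $k$.

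Then I would derive a contradiction. Apply \cite[Lemma 4.5]{Kalton04} to the weakly null sequence $(\hat v_k)_k$ \emph{inside the bounded metric space} $\cl{A_\infty}$: it yields a finite set $E^\ast\subseteq\cl{A_\infty}$ with $\dist(\hat v_k,\Free([E^\ast]_{\delta_0/3}))\le\varepsilon'$ for all $k$, where $\varepsilon'$ may be prescribed; choose it so that $C_0\varepsilon'<\varepsilon_0/8$, with $C_0:=1+3\rad(M)/\delta_0$. Since $E^\ast$ is finite and the $A_n$ increase to $A_\infty$, for all large $k$ we have $E^\ast\subseteq[A_{n_{k+1}-1}]_{\delta_0/3}$. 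Take a Lipschitz cutoff $g$ on $M$ with $0\le g\le1$, $g\equiv1$ on $[E^\ast]_{\delta_0/3}$, $g\equiv0$ off $[E^\ast]_{2\delta_0/3}$, $\lipnorm g\le 3/\delta_0$, and let $S=T_g^\ast$, $S'=T_{1-g}^\ast$ be the preadjoints of the multiplication operators of \eqref{mult_operator}, so $S+S'=\mathrm{Id}_{\Free(M)}$ and $\norm{S'}\le C_0$ by \eqref{mult_operator_norm}. On one hand, since $\hat v_k$ is finitely supported on $\supp\nu_{n_{k+1}}\cup\supp\nu_{n_k}$ and $g$ is supported in $[E^\ast]_{2\delta_0/3}$, the support of $S\hat v_k$ lies in $([E^\ast]_{2\delta_0/3}\cap\supp\nu_{n_{k+1}})\cup\supp\nu_{n_k}\subseteq[A_{n_{k+1}-1}]_{\delta_0}$ (the first piece because $E^\ast\subseteq[A_{n_{k+1}-1}]_{\delta_0/3}$ forces $d(\,\cdot\,,A_{n_{k+1}-1})\le\delta_0$, the second because $\supp\nu_{n_k}\subseteq A_{n_k}\subseteq A_{n_{k+1}-1}$); hence $S\hat v_k\in\Free([A_{n_{k+1}-1}]_{\delta_0})$. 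On the other hand $T_{1-g}^\ast$ annihilates $\Free([E^\ast]_{\delta_0/3})$ because $1-g$ vanishes there, so $\norm{S'\hat v_k}\le\norm{S'}\dist(\hat v_k,\Free([E^\ast]_{\delta_0/3}))\le C_0\varepsilon'<\varepsilon_0/8$. Therefore $\dist(\hat v_k,\Free([A_{n_{k+1}-1}]_{\delta_0}))\le\norm{\hat v_k-S\hat v_k}=\norm{S'\hat v_k}<\varepsilon_0/8$, contradicting the lower bound $>\varepsilon_0/2$; so $W$ has Kalton's property.

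The main obstacle is that Kalton's lemma delivers a finite set $E^\ast$ with no a priori relation to the sets $A_n$ of the construction. The device that makes the two compatible is to invoke the lemma \emph{inside} $\cl{A_\infty}$ rather than inside $M$ — legitimate since $(\hat v_k)_k$ is weakly null and lies in $\Free(\cl{A_\infty})$, and $\cl{A_\infty}$ is bounded — which pins $E^\ast$ down to $\cl{A_\infty}$ and hence eventually inside $[A_{n_{k+1}-1}]_{\delta_0/3}$. Everything else is bookkeeping: choosing the nested radii $\delta_0/3<2\delta_0/3<\delta_0$ and the slack $\varepsilon'$ so that the operator-norm bound \eqref{mult_operator_norm} for the cutoff leaves room for the final strict inequality, and using that finitely supported elements are dense to make the finite approximations $\nu_n$ available.
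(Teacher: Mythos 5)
Your argument is correct. Its first half --- assuming failure for some $\varepsilon_0,\delta_0$, inductively picking $\mu_n\in W$ with $\dist(\mu_n,\Free([A_{n-1}]_{\delta_0}))>\varepsilon_0$, approximating by finitely supported $\nu_n$, and passing to a weakly Cauchy subsequence and then to the weakly null differences --- is the same as the paper's. The divergence is in how the contradiction is extracted. The paper does not invoke Kalton's Lemma~4.5 as a black box; it reruns Kalton's construction: it inductively builds positive, mutually disjointly supported functions $h_k$ with $\lipnorm{h_k}\le R/\delta$ that pair at least $\varepsilon/4$ with the $k$-th difference and almost annihilate the later ones, then sums them into a single Lipschitz function against which the weakly null sequence of differences fails to tend to zero. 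You instead cite the lemma for the weakly null sequence $(\hat v_k)_k$, and your key device --- applying it \emph{inside} the bounded set $\overline{A_\infty}$, which pins the finite set $E^\ast$ down so that it is eventually contained in $[A_{n_{k+1}-1}]_{\delta_0/3}$, and then converting ``close to $\Free([E^\ast]_{\delta_0/3})$'' into ``close to $\Free([A_{n_{k+1}-1}]_{\delta_0})$'' via the cutoff $g$ and the bound $\norm{T_{1-g}^\ast}\le 1+3\rad(M)/\delta_0$ from \eqref{mult_operator_norm} --- is exactly the right move; the estimates all check out, and your argument is even insensitive to whether the cited lemma controls the whole sequence or only a tail, since a single large $k$ suffices for the contradiction. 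What your route buys is brevity and reuse of a published result; what the paper's buys is a self-contained proof that depends only on the idea behind Kalton's lemma rather than its precise formulation. Both are legitimate.
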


\begin{proof}
\ Let us assume that the assertion is not true.
 So there exist $\delta>0$ and $\varepsilon>0$ such that the conclusion does not hold, namely, for every finite set $E \subset M$, there is $\mu\in W$ such that
 $$ d(\mu,\Free([E]_\delta)) > \ep.$$
 We may assume that $R\delta^{-1}\geq 1$ where $R=\rad(M)$. We will first construct sequences $(\mu_n)_n \subset W$  and $(\lambda_n)_n \subset \Free(M)$ such that $\norm{\mu_n-\lambda_n}\to 0$ and
 every $\lambda_n$ is finitely supported.
 Let $\mu_1 \in W$ be arbitrary such that $\norm{\mu_1}>\varepsilon$. 
 Let $\lambda_1 \in \Free(M)$ be such that $\norm{\mu_1-\lambda_1}\leq 2^{-1}\varepsilon$ and $E_1=\supp(\lambda_1)$ is finite.
 By the hypothesis there exists $\mu_2 \in W$ such that $d(\mu_2,\Free([E_1]_\delta)>\varepsilon$.
 Let $\lambda_2 \in \Free(M)$ be finitely supported and such that $\norm{\mu_2-\lambda_2}\leq 2^{-2}\varepsilon$. We denote $E_2=E_1\cup\supp(\lambda_2)$.
 Notice that $d(\lambda_2,[E_1]_\delta)>\varepsilon/2$.
 Continuing this way we will get an increasing family of finite sets $(E_n)_n$ and a sequence $(\lambda_n)_n \subset \Free(M)$ such that for $n \in \N$:
 \begin{itemize}
     \item \ $\lambda_n \in \Free(E_n)$,
     \item \ $\|\lambda_n - \mu_n\| \leq 2^{-n} \ep$ and
     \item \ $d(\lambda_n,\Free([E_{n-1}]_\delta))>\varepsilon/2.$
 \end{itemize}
  Now since $W$ is weakly precompact, there is a weakly Cauchy subsequence $(\mu_{n_k})_k$ of $(\mu_n)_n$.
 By construction $(\lambda_{n_k})_k$ is also weakly Cauchy. 
 Notice that since
 $$d(\lambda_{n_k},\Free([E_{n_k-1}]_\delta))>\varepsilon/2,$$
 we also have $d(\lambda_{n_k},\Free([E_{n_{k-1}}]_\delta))>\varepsilon/2$. 
 So we may write $(\lambda_n)_n$ instead of $(\lambda_{n_k})_k$ in the sequel.
 Denote $\xi_n:=\lambda_{n}-\lambda_{n-1}$, so that $(\xi_n)_n$ is weakly null.
 We have $\supp(\xi_n) \subset E_n$ and $d(\xi_n,\Free([E_{n-1}]_\delta))=d(\lambda_n,\Free([E_{n-1}]_\delta))>\varepsilon/2$.
 Here we assume tacitly that $\lambda_0=0$ and $E_0=\set{0}$.
 
 We will show that this leads to a contradiction.
Let $n_1=1$ and choose a positive $h_1\in B_{\Lip_0(M)}$ such that $\abs{\duality{h_1,\xi_{n_1}}}>\varepsilon/4$. By induction, we will build an increasing sequence of integers $(n_k)_{k \in \N}$ as well as Lipschitz maps $(h_k)_{k \in \N} \subset R\delta^{-1}B_{\Lip_0(M)}$ with the following properties:
\begin{itemize} 
		\item \ $|\duality{h_k,\xi_{n_k}}| \geq \ep/4$,
		\item \ $\abs{\duality{h_i,\xi_{n_j}}}<\frac{\varepsilon}{8k}$ whenever $i\leq k$ and $j>k$,
		\item \ $h_{k} \geq 0$, $h_{k}$ is zero on $[E_{n_{k-1}}]_\delta$ and $(h_k)_k$ have mutually disjoint supports.
\end{itemize}
Assume that $n_k$ has been selected, as well as $h_1,\ldots,h_k$. 
We pick $n_{k+1}>n_k$ such that for all $n\geq n_{k+1}$ and all $i\leq k$ we have $\abs{\duality{h_i,\xi_n}}<\dfrac{\varepsilon}{8k}$.
By the Hahn-Banach theorem there is $f_{k+1}\in B_{\Lip_0(M)}$ which is zero on $[E_{n_k}]_\delta$ and  $\duality{f_{k+1},\xi_{n_{k+1}}}>\varepsilon/2$.
We define $g_{k+1}$ as either the positive or the negative part of $f_{k+1}$. 
The choice is made so that $\abs{\duality{g_{k+1},\xi_{n_{k+1}}}} \geq \varepsilon/4$. 
We now define
\[
h_{k+1}(x) = \max\Big( \sup_{y\in S} (g_{k+1}(y)-R\delta^{-1}d(x,y))  \; , \; 0 \Big),
\]
for $x\in M$, where $S = \supp(\xi_{n_{k+1}})$.
Notice that $h_{k+1}$ is the smallest positive $\frac{R}{\delta}$-Lipschitz extension to the whole space $M$ of the restriction $g_{k+1}\restricted_{S}$. Since $g_{k+1}$ is also clearly one such extension, we have $$0\leq h_{k+1}\leq g_{k+1}.$$ 
In particular, $h_{k+1}$ is zero on $[E_{n_k}]_\delta$ (and everywhere where $g_{k+1}$ was zero).
Now if $l<k$ and $h_{k}(x)\neq 0$ then $x \notin [E_{n_l}]_\delta$. But then $h_l(x)=0$ by the choice of the constant of the extension above.
So $(h_k)_k$ have mutually disjoint supports. This finishes the inductive construction.

Now, letting $h=\sup_k h_k$ we have $h=\sum_{k=1}^\infty h_k$ pointwise and $\lipnorm{h}\leq R\delta^{-1}$. Therefore $h\in\Lip_0(M)$. To finish, for every $k$ we have
\begin{align*}
\abs{\duality{\xi_{n_k},h}} = \abs{\duality{\xi_{n_k},\sum_{i=1}^k h_i}} &\geq \abs{\duality{\xi_{n_k},h_k}} - \sum_{i=1}^{k-1}\abs{\duality{\xi_{n_k},h_i}} \\
&\geq \frac{\varepsilon}{4} - \sum_{i=1}^{k-1}\frac{\varepsilon}{8(k-1)} \geq \frac{\varepsilon}{8}
\end{align*}
contradicting the fact that $(\xi_n)_n$ is weakly null.
\end{proof}

Notice that Theorem~\ref{thm:WeaklyPrecompactIsTight} is now proved for bounded metric spaces. 
In order to remove the hypothesis of boundedness in Proposition~\ref{prop:generalKaltonLemma}, we need to undergo some more tedious work.

\begin{lemma}\label{lm:fuck_boundedness} 
Let $M$ be a pointed metric space and let $(\mu_n)_n$ be a weakly Cauchy sequence in $\lipfree{M}$. 
Then for every $\varepsilon>0$ there exists a bounded set $C\subset M$ such that
\[
(\mu_n)_n\subset \Free(C) + \varepsilon B_{\Free(M)}.
\]
\end{lemma}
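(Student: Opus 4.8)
The plan is to prove the contrapositive by a gliding–hump argument in the spirit of Proposition~\ref{prop:generalKaltonLemma}; the two new ingredients are that the humps must be pushed off to infinity and that $(\mu_n)_n$ is only weakly Cauchy, not weakly null. First I would reformulate: since $\Free(A)\subset\Free(A')$ whenever $A\subset A'$, and finitely supported elements are norm–dense in $\Free(M)$, it suffices to produce, for each $\varepsilon>0$, a radius $R>0$ with $\sup_n\dist\bigl(\mu_n,\Free(B(0,R))\bigr)<\varepsilon$, and then take $C=B(0,R)$. Suppose this fails for some $\varepsilon_0>0$. A short patching argument shows that then, for \emph{every} $R>0$, the set $\{\,n:\dist(\mu_n,\Free(B(0,R)))>\varepsilon_0/2\,\}$ is infinite: were it finite one could, using finite–support density, enlarge $B(0,R)$ to a ball that also $\varepsilon_0$–captures the finitely many exceptional $\mu_n$, contradicting the failure for that larger ball.

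Next I would construct recursively indices $a_1<b_1<a_2<b_2<\cdots$, radii $R_1'<R_1''<R_2'<R_2''<\cdots\to\infty$ with $2R_k''<R_{k+1}'$, finitely supported approximants $\lambda_{a_k},\lambda_{b_k}$ of $\mu_{a_k},\mu_{b_k}$ (to within summably small errors, with $\supp\lambda_{a_k}\subset B(0,R_k')$ and $\supp\lambda_{b_k}\subset B(0,R_k'')$), and bump functions $\phi_k\in\Lip_0(M)$ with the following properties, writing $\eta_k:=\mu_{b_k}-\mu_{a_k}$: $\phi_k\ge 0$, $\lipnorm{\phi_k}\le 3$, $\supp\phi_k\subset\{x:R_k'<d(0,x)\le 2R_k''\}$ (hence the $\supp\phi_k$ are pairwise disjoint and run off to infinity), $\duality{\phi_k,\eta_k}\ge c$ for a fixed $c>0$, and $\abs{\duality{\phi_j,\eta_k}}$ is summably small for $j\ne k$. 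The index $b_k$ is chosen, by the infinitude just established, so that $\dist(\mu_{b_k},\Free(B(0,R_k')))>\varepsilon_0/2$; approximating by $\lambda_{b_k}$ and multiplying a function $\psi\in B_{\Lip_0(M)}$ that vanishes on $B(0,R_k')$ with $\duality{\psi,\mu_{b_k}}$ close to that distance by a Lipschitz cutoff attached to the annulus produces a $\phi_k$ with $\duality{\phi_k,\mu_{b_k}}$ bounded below and $\duality{\phi_k,\mu_{a_k}}$ negligible (since $\phi_k$ vanishes on $B(0,R_k')\supset\supp\lambda_{a_k}$); passing to a subsequence we also fix the sign of $\duality{\phi_k,\mu_{b_k}}$, which lets us take the $\phi_k$ nonnegative by replacing $\psi$ with its positive or negative part. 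The cross terms with $j>k$ are $O$ of the approximation errors, because every such $\phi_j$ is supported beyond $R_k''$, hence vanishes on $\supp\lambda_{a_k}\cup\supp\lambda_{b_k}$. The cross terms with $j<k$ are where weak Cauchyness enters: each $\phi_j$ is already fixed and $(\duality{\phi_j,\mu_n})_n$ is a Cauchy sequence of scalars, so choosing $a_k$ and $b_k$ large enough forces $\duality{\phi_j,\mu_{a_k}}$ and $\duality{\phi_j,\mu_{b_k}}$ to lie within $2^{-k}/k$ of their common limit for every $j<k$, whence $\abs{\duality{\phi_j,\eta_k}}<2^{1-k}/k$.

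The \emph{non-overlapping} pattern $a_1<b_1<a_2<b_2<\cdots$ is the key point: with consecutive differences $\mu_{n_k}-\mu_{n_{k-1}}$ the bump $\phi_{k-1}$, being tuned to $\mu_{n_{k-1}}$, would reappear with the opposite sign in $\eta_k$, the pertinent sums would telescope, and no contradiction would ensue; here each $\mu_{b_j}$ is touched by exactly one $\eta_k$, so no cancellation occurs. Finally, since the $\phi_k$ are nonnegative with pairwise disjoint supports, $h:=\sup_k\phi_k=\sum_k\phi_k$ is a genuine element of $\Lip_0(M)$ with $\lipnorm{h}\le 3$ and $h(0)=0$, and $\duality{h,\eta_k}=\sum_j\duality{\phi_j,\eta_k}$ converges absolutely; collecting the estimates gives $\abs{\duality{h,\eta_k}}\ge c/2$ for all large $k$. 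On the other hand $(\eta_k)_k$ is weakly null: for every $g\in\Lip_0(M)$, since $(\mu_n)_n$ is weakly Cauchy and $a_k,b_k\to\infty$, both $\duality{g,\mu_{a_k}}$ and $\duality{g,\mu_{b_k}}$ converge to $\lim_n\duality{g,\mu_n}$, so $\duality{g,\eta_k}\to 0$. This contradiction proves the lemma. The genuinely laborious part will be the bookkeeping of the recursion — getting the $\phi_k$ to have bounded support and Lipschitz constant $\le 3$ while sitting beyond the a priori uncontrolled radii $R_k'$, and fixing all the thresholds consistently — but conceptually everything rests on the non-overlapping differences and on the fact that weak Cauchyness is exactly enough to stabilise the finitely many previously built bumps at each step.
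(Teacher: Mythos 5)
Your argument is correct, and its core engine --- bumps $\phi_k$ obtained by taking a positive or negative part of a Hahn--Banach functional vanishing on a large ball and then truncating with a radial cutoff, so that $\lipnorm{\phi_k}\le 3$, the supports are disjoint, and $h=\sup_k\phi_k$ is Lipschitz --- is exactly the paper's. The one genuine divergence is how the weakly Cauchy (as opposed to weakly null) case is handled. The paper first proves the lemma for weakly null sequences and then disposes of the Cauchy case in three lines: it extracts $\mu_{n_k}$ with $d(\mu_{n_k},\Free(B(0,2R_{k-1})))>\varepsilon$ where $R_k=\rad(\supp\mu_{n_k})$, and observes that the \emph{consecutive} differences $\gamma_k=\mu_{n_k}-\mu_{n_{k-1}}$ are weakly null and satisfy $d(\gamma_k,\Free(B(0,2R_{k-1})))=d(\mu_{n_k},\Free(B(0,2R_{k-1})))>\varepsilon$ because $\mu_{n_{k-1}}$ already lies in $\Free(B(0,2R_{k-1}))$; the weakly null case then applies to $(\gamma_k)_k$ as a black box. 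Your telescoping worry does not apply to that reduction: the bumps there are built for the differences $\gamma_k$ themselves, and the smallness of earlier bumps on later $\gamma_k$ comes from weak nullity of $(\gamma_k)_k$, not from support disjointness with the individual $\mu_{n_k}$. Your worry is, however, a legitimate obstruction to running a \emph{single} gliding hump on consecutive differences of the original sequence, and your fix --- non-overlapping pairs $a_1<b_1<a_2<b_2<\cdots$ together with weak Cauchyness to stabilise $\duality{\phi_j,\mu_{a_k}}-\duality{\phi_j,\mu_{b_k}}$ for $j<k$ --- is a valid alternative that handles everything in one pass and makes explicit exactly where Cauchyness (rather than nullity) enters. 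The price is heavier bookkeeping; in particular, when you bound $\sum_{j>k}\duality{\phi_j,\eta_k}$ you should estimate $\duality{\sum_{j>k}\phi_j,\eta_k}$ as a single pairing (using that $\sum_{j>k}\phi_j$ is $3$-Lipschitz and vanishes on $\supp\lambda_{a_k}\cup\supp\lambda_{b_k}$), since the termwise bounds by the fixed approximation errors of $\mu_{a_k},\mu_{b_k}$ do not sum over $j$. With that caveat your proof goes through.
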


\begin{proof}
\ Without loss of generality, we may assume that all the $\mu_n$ are finitely supported.

We will first prove the lemma under the assumption that $(\mu_n)_n$ is weakly null. 
Aiming for a contradiction, suppose that the lemma fails, that is, there exists $\ep>0$ such that for every bounded subset $C \subset M$:
$$ \sup_n d(\mu_n , \Free(C)) > \ep.$$
Since every $\mu_n$ has bounded support, it follows easily that we may, in fact, replace ``$\sup$'' by ``$\limsup$'' in the inequality above.
Let $R_0=1$, $n_0=1$ and $g_0=0$. By induction, we will construct sequences $(n_k)_k$ in $\N$, $(R_k)_k$ in $\R$ 
and $(g_k)_k$ in $\Lip_0(M)$ with the following properties:
\begin{itemize}
	\item \ $\abs{\duality{\mu_{n_k},g_k}}\geq\frac{\varepsilon}{2}$,
	\item \ $\abs{\duality{\mu_{n_k} ,g_i}}\leq 2^{-(2+i)}\varepsilon$ whenever $i \leq k-1$,
	\item \ $g_k\geq 0$, $\lipnorm{g_k}\leq 3$ and $g_k$ vanishes on $B(0,2R_{k-1})$ but is supported on $B(0,2R_k)$,
	\item \ $\mu_{n_k}\in\lipfree{B(0,R_k)}$ and $R_k>2R_{k-1}$.
\end{itemize}
Suppose the sequences have been defined up to index $k-1$.
Since $(\mu_n)_n$ is weakly null and by assumption, we can choose $n_k>n_{k-1}$ such that
\begin{equation}
\label{eq:lemma_fuck_boundedness}
d(\mu_{n_k},\lipfree{B(0,2R_{k-1})})\geq\varepsilon
\end{equation}
and such that
$$\abs{\duality{\mu_{n_k} ,g_i}}\leq 2^{-(2+i)}\varepsilon$$
for $i\leq k-1$.
By the Hahn-Banach theorem, there is $f_k\in\ball{\Lip_0(M)}$ that vanishes on $B(0,2R_{k-1})$ and such that $\duality{\mu_{n_k},f_k}\geq\varepsilon$.
By replacing $f_k$ with its positive or negative part, we may assume that $f_k$ is positive and $\abs{\duality{\mu_{n_k},f_k}}\geq\frac{\varepsilon}{2}$ instead.
Now let $R_k=\rad(\supp(\mu_{n_k}))$ (notice that $R_k>2R_{k-1}$ by \eqref{eq:lemma_fuck_boundedness}) and let $g_k=f_k\cdot h_k$ where
$$
h_k(x)=\begin{cases}
1 &,\; d(x,0)\leq R_k \\
2-\frac{d(x,0)}{R_k} &,\; R_k\leq d(x,0)\leq 2R_k \\
0 &,\; d(x,0)\geq 2R_k
\end{cases}
$$
for $x\in M$. Then $g_k\geq 0$, $\abs{\duality{\mu_{n_k},g_k}}=\abs{\duality{\mu_{n_k},f_k}}\geq\frac{\varepsilon}{2}$, $\supp(g_k)\subset B(0,2R_k)$, and $\lipnorm{g_k}\leq 3\lipnorm{f_k}\leq 3$ by \eqref{mult_operator_norm}, since $\norm{T_{h_k}}\leq 3$. This completes the construction.

Now let $g=\sup_k g_k$, which also equals the pointwise sum of the $g_k$. Then $\lipnorm{g}\leq 3$ so $g\in\Lip_0(M)$. For every $k$ we have
\begin{align*}
\abs{\duality{\mu_{n_k},g}} = \abs{\duality{\mu_{n_k},\sum_{i=1}^k g_i}} &\geq \abs{\duality{\mu_{n_k},g_k}} - \sum_{i=1}^{k-1}\abs{\duality{\mu_{n_k},g_i}} \\
&\geq \frac{\varepsilon}{2} - \sum_{i=1}^{k-1}\frac{\varepsilon}{2^{2+i}} \geq \frac{\varepsilon}{4}
\end{align*}
contradicting the fact that $(\mu_n)$ is weakly null. This settles the weakly null case.

In the general case where $(\mu_n)_n$ is weakly Cauchy, again assume for contradiction that the lemma fails. We may again extract a subsequence $(\mu_{n_k})_k$ such that
$$d(\mu_{n_k}, \Free(B(0,2R_{k-1}))) > \ep ,$$
where $R_k = \rad(\supp(\mu_{n_k}))$. We let $\gamma_k = \mu_{n_k} - \mu_{n_{k-1}}$ for every $k \geq 2$. 
It is readily seen that $(\gamma_k)_k$ is weakly null. Moreover, since $\mu_{n_k} \in \Free(B(0,R_{k}))$ for every $k$, we also deduce that
$$ d(\gamma_k , \Free(B(0,2R_{k-1}))) = d(\mu_{n_k} , \Free(B(0,2R_{k-1})) )  > \ep. $$
Notice that the support of $\mu_{n_k}$ is not contained in the ball centered at $0$ and of radius $2R_{k-1}$, in other words $R_k\geq 2R_{k-1}$. Therefore $\lim\limits_{k \to +\infty} R_k = +\infty$, this contradicts the first part of the proof.
\end{proof}

\begin{lemma}\label{lm:fuck_boundedness2} 
Let $M$ be a pointed metric space and let $W \subset \lipfree{M}$ 
be weakly precompact.
Then for every $\varepsilon>0$ there exists a bounded set $C\subset M$ such that
\[
W\subset \Free(C) + \varepsilon B_{\Free(M)}.
\]
\end{lemma}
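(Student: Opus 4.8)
The plan is to argue by contradiction and reduce everything to Lemma~\ref{lm:fuck_boundedness}. First I would negate the conclusion: if it fails, there is some $\varepsilon_0>0$ such that for \emph{every} bounded $C\subset M$ one has $W\not\subset\Free(C)+\varepsilon_0 B_{\Free(M)}$, that is, some $\mu\in W$ satisfies $d(\mu,\Free(C))>\varepsilon_0$. Feeding in the closed balls $C=B(0,n)$ for $n\in\N$, I obtain a sequence $(\mu_n)_n\subset W$ with
\[
d(\mu_n,\Free(B(0,n)))>\varepsilon_0\qquad\text{for all }n\in\N ,
\]
so that the ``mass'' of $\mu_n$ is, in a quantitative sense, escaping to infinity.

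Next, since $W$ is weakly precompact, some subsequence $(\mu_{n_k})_k$ is weakly Cauchy. I would then apply Lemma~\ref{lm:fuck_boundedness} to this weakly Cauchy sequence with the constant $\varepsilon_0$: it produces a \emph{bounded} set $C_0\subset M$ (which we may assume contains the base point $0$) such that $(\mu_{n_k})_k\subset\Free(C_0)+\varepsilon_0 B_{\Free(M)}$, hence $d(\mu_{n_k},\Free(C_0))\leq\varepsilon_0$ for every $k$. Choosing $R>0$ with $C_0\subset B(0,R)$, the canonical isometric identifications recalled in the Notation give $\Free(C_0)\subset\Free(B(0,R))\subset\Free(B(0,n_k))$ as soon as $n_k\geq R$, and therefore
\[
d(\mu_{n_k},\Free(B(0,n_k)))\leq d(\mu_{n_k},\Free(C_0))\leq\varepsilon_0
\]
for all large $k$, which is the desired contradiction.

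I do not expect this argument to present any genuine obstacle of its own: all the difficulty is already packed into Lemma~\ref{lm:fuck_boundedness}, whose purpose is exactly to confine a weakly Cauchy sequence to a bounded region of $M$. The present statement is then just the routine passage from sequences to weakly precompact sets via the very definition of weak precompactness (Definition~\ref{def:WeakPrecompact}), entirely parallel to the way Proposition~\ref{prop:generalKaltonLemma} is deduced from its weakly-null special case. Alternatively, one could rerun the inductive construction in the proof of Lemma~\ref{lm:fuck_boundedness} almost verbatim, passing to a weakly Cauchy subsequence of a putative counterexample and working with consecutive differences; but the reduction above is shorter and isolates the new content in Lemma~\ref{lm:fuck_boundedness}.
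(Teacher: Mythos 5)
Your argument is correct and is essentially the paper's own proof: both negate the conclusion, extract from $W$ a sequence whose distance to $\Free(B(0,R_n))$ exceeds $\varepsilon$ with $R_n\to\infty$, pass to a weakly Cauchy subsequence via weak precompactness, and contradict Lemma~\ref{lm:fuck_boundedness}. The only cosmetic difference is that you take the radii to be $n$ directly rather than building an abstract increasing sequence $(R_n)_n$.
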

 \begin{proof} 
 \ Aiming for a contradiction, assume that the conclusion is not true for some fixed $\varepsilon>0$. Let $x_1 \in W$ such that $\|x_1\|> \ep$ and let us write $R_1 = 0$. Using our assumption we may build by induction an increasing sequence $(R_n)_n \subset \Real$ and a sequence $(x_n)_n \subset W$ such that $\lim\limits_{n \to \infty} R_n = \infty$ and for every $n \in \Natural$
\begin{equation}
\label{eq:lemma_fuck_boundedness2}
d\Big(x_{n} , \Free\big(B(0,R_{n-1})\big)\Big) > \ep.
\end{equation}
Now since $(x_n)_n\subset W$, it admits a weakly Cauchy subsequence $(x_{n_k})_k$.
But this together with \eqref{eq:lemma_fuck_boundedness2} contradicts Lemma~\ref{lm:fuck_boundedness}.
 \end{proof}

We now prove the unbounded version of Proposition~\ref{prop:generalKaltonLemma}.
\begin{proposition}[General case] \label{prop:GeneralKaltonLemma} Let $M$ be a pointed metric space. 
 If $W \subset \Free(M)$ is a weakly precompact set, then $W$ has Kalton's property.
\end{proposition}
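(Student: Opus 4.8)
The plan is to reduce the general (unbounded) case to the bounded case already established in Proposition~\ref{prop:generalKaltonLemma}. The reduction proceeds in two moves: first invoke Lemma~\ref{lm:fuck_boundedness2} to confine $W$, up to an arbitrarily small perturbation, inside $\Free(C)$ for some bounded $C\subset M$; then ``truncate'' $\Free(M)$ onto the free space over a bounded subset of $M$ by means of a multiplication operator, and apply the bounded case there.

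Concretely, I would fix target parameters $\varepsilon_0,\delta_0>0$ and use Lemma~\ref{lm:fuck_boundedness2} to obtain a bounded set $C\subset M$, which we may assume contains $0$, with $W\subset\Free(C)+\tfrac{\varepsilon_0}{8}\ball{\Free(M)}$. Setting $R=\rad(C)$ and $N=B(0,2R)$ --- a closed bounded pointed subset of $M$, whose free space sits isometrically inside $\Free(M)$ and to which Proposition~\ref{prop:generalKaltonLemma} applies --- I would pick a Lipschitz function $h$ on $M$ with $0\le h\le 1$, $h\equiv 1$ on $B(0,R)\supset C$, $h\equiv 0$ off $N$, and $\lipnorm h\le 1/R$ (for instance $h(x)=\min(1,\max(0,2-d(0,x)/R))$). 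Then $\supp h\subset N$, and \eqref{mult_operator_norm} bounds the multiplication operator $T_h\colon\Lip_0(N)\to\Lip_0(M)$ by $\norm{T_h}\le 1+2R\cdot\tfrac1R=3$, so its preadjoint $T_h^\ast\colon\Free(M)\to\Free(N)$ also has norm at most $3$. Since $T_h^\ast\delta_M(x)=h(x)\delta_N(x)$ for $x\in N$ and $h\equiv 1$ on $C$, the operator $T_h^\ast$ restricts to the identity on $\Free(C)$; writing $\mu=\nu+\rho\in W$ with $\nu\in\Free(C)$ and $\norm\rho\le\varepsilon_0/8$ then gives $T_h^\ast\mu=\nu+T_h^\ast\rho$, whence $\norm{\mu-T_h^\ast\mu}=\norm{\rho-T_h^\ast\rho}\le(1+\norm{T_h^\ast})\tfrac{\varepsilon_0}{8}\le\tfrac{\varepsilon_0}{2}$.

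The next step is to observe that $T_h^\ast(W)\subset\Free(N)$ is again weakly precompact: a sequence in $T_h^\ast(W)$ is of the form $(T_h^\ast\mu_n)_n$ with $(\mu_n)_n\subset W$, and a weakly Cauchy subsequence $(\mu_{n_k})_k$ (which exists by weak precompactness of $W$) is carried by the fixed bounded operator $T_h^\ast$ to a weakly Cauchy sequence, since $\langle g,T_h^\ast\mu_{n_k}\rangle=\langle T_hg,\mu_{n_k}\rangle$ converges for every $g\in\Lip_0(N)$. As $N$ is bounded, Proposition~\ref{prop:generalKaltonLemma} applies to $T_h^\ast(W)$ inside $\Free(N)$ and, for the parameters $\delta_0$ and $\varepsilon_0/2$, produces a finite set $E\subset N$ with $T_h^\ast(W)\subset\Free(\set{x\in N:d(x,E)\le\delta_0})+\tfrac{\varepsilon_0}{2}\ball{\Free(N)}$. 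Using $\set{x\in N:d(x,E)\le\delta_0}\subset[E]_{\delta_0}$ and $\ball{\Free(N)}\subset\ball{\Free(M)}$, I would combine this with the perturbation estimate of the previous paragraph to conclude $\mu\in\Free([E]_{\delta_0})+\varepsilon_0\ball{\Free(M)}$ for every $\mu\in W$; since $\varepsilon_0,\delta_0>0$ were arbitrary, $W$ has Kalton's property.

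I do not expect a genuine obstacle: the substance is already contained in Lemma~\ref{lm:fuck_boundedness2} and Proposition~\ref{prop:generalKaltonLemma}. The one point to be careful about is performing the reduction through a \emph{single fixed} operator $T_h^\ast$ rather than an $\varepsilon$-net of perturbations, since that is precisely what preserves weak precompactness and lets the bounded case be invoked; the remaining work --- the choice of $h$, the norm estimate from \eqref{mult_operator_norm}, and the inclusion of the $\delta_0$-neighbourhoods --- is routine bookkeeping.
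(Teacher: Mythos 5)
Your proposal is correct and follows essentially the same route as the paper's proof: reduce to a bounded ball via Lemma~\ref{lm:fuck_boundedness2}, truncate with a single multiplication operator $T_h^\ast$ of norm at most $3$ that fixes $\Free(C)$, note that the image of $W$ remains weakly precompact, and invoke Proposition~\ref{prop:generalKaltonLemma}. The only differences are cosmetic choices of constants in the final perturbation estimate.
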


 \begin{proof}
 \ Fix $\ep, \delta>0$ and let $W$ be a weakly precompact set in $\Free(M)$. By Lemma~\ref{lm:fuck_boundedness2}, there is a bounded set $C \subset M$ such that $W \subset \Free(C) +  \ep  B_{\Free(M)}$. Without loss of generality, assume that $C=B(0,R)$ for some $R>0$ and denote $C'=B(0,2R)$. Next, define a map $h :M \to \R$ as follows: $h(x) = 1$ for every $ x \in C$, $h(x) = 0$ whenever $x \not \in C'$ and extend $h$ on $C'\setminus C$ using the McShane-Whitney extension formula so that $\lipnorm{h}\leq\frac{1}{R}$ (plus bounding above and below by 1 and 0 to ensure $0 \leq h(x) \leq 1$ for every $x$).
Let $T_h : \Lip_0(C') \to \Lip_0(M)$ be the linear operator defined in \eqref{mult_operator}, and write $S_h : \Free(M) \to \Free(C')$ for its predual operator. Note that
$\norm{S_h}=\norm{T_h}\leq 3$ by \eqref{mult_operator_norm} and that $S_h$ acts on $\lipfree{C}$ as the identity.
Since $W$ is weakly precompact, the set $S_h(W)$ is also weakly precompact in $\Free(C')$. 
By Proposition~\ref{prop:generalKaltonLemma}, there exists a finite set $E \subset C'$ such that
$$S_h(W) \subset \Free([E]_{\delta}) + \ep B_{\Free(M)}.$$

Consider $x \in W$. Since $W \subset \Free(C) + \ep B_{\Free(M)}$, there exists $y \in \Free(C)$ such that $\|x-y\| \leq  \ep$. Consequently we have
$$  \| x - S_h(x) \| \leq \|x - y\| + \|S_h(y) - S_h(x)\| \leq 4\ep . $$
We now conclude with the following inclusions:
$$W \subset S_h(W) + 4\ep B_{\Free(M)} \subset \Free([E]_{\delta}) + 5\ep B_{\Free(M)}.$$
\end{proof}

Proposition~\ref{prop:GeneralKaltonLemma} together with Theorem~\ref{thm:KaltonPropertyToCompact} now prove Theorem~\ref{thm:WeaklyPrecompactIsTight} as promised.

\section{A remark about the Schur property in free spaces}\label{s:SchurExamples}

It is proved in \cite[Proposition 8]{Petitjean} that if the set of uniformly locally flat functions in $\Lip_0(M)$, i.e. 
\[\mathrm{lip}_0(M)=\set{f \in \Lip_0(M): \lim_{\varepsilon\to 0} \sup_{0<d(x,y)<\varepsilon} \frac{\abs{f(x)-f(y)}}{d(x,y)}=0},\] 
is 1-norming for $\F M$ then $\F M$ has the Schur property. This result is in fact an extension of a previous result due to Kalton \cite[Theorem 4.6]{Kalton04}. The fact that $\mathrm{lip}_0(M)$ is norming with constant 1 is essential in the proof of those results. However, thanks to a renorming trick (see also \cite[p. 150]{Weaver2}) we can slightly generalise the result. 

\begin{proposition}
Let $M$ be a metric space such that $\mathrm{lip}_0(M)$ is $C$-norming for some $C\geq 1$, that is 
\[ \forall \gamma \in \F M , \quad \|\gamma \| \leq C \sup_{f\in B_{\mathrm{lip}_0(M)}} | \langle f , \gamma \rangle | . \]
Then $\F M$ has the Schur property. 
\end{proposition}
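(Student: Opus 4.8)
The plan is to deduce this from the $1$-norming case treated in \cite[Proposition 8]{Petitjean} (an extension of \cite[Theorem 4.6]{Kalton04}) by a renorming. Define, for $\mu\in\Free(M)$,
\[
|||\mu||| := \sup\{\,|\langle f,\mu\rangle| : f\in\mathrm{lip}_0(M),\ \lipnorm{f}\le 1\,\}.
\]
From $B_{\mathrm{lip}_0(M)}\subseteq B_{\Lip_0(M)}$ we get $|||\mu|||\le\|\mu\|$, while the $C$-norming hypothesis says precisely that $\|\mu\|\le C|||\mu|||$; in particular $\mathrm{lip}_0(M)$ separates the points of $\Free(M)$, so $|||\cdot|||$ is a norm on $\Free(M)$ equivalent to the original one, and---tautologically---$\mathrm{lip}_0(M)$ is $1$-norming for $(\Free(M),|||\cdot|||)$.

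Next I would re-examine the proof of \cite[Proposition 8]{Petitjean}. Arguing by contradiction from a weakly-null sequence, it uses only: (i) that finitely supported elements are norm-dense in the free space; (ii) that the distinguished subspace of uniformly locally flat functions is $1$-norming; and (iii) that weakly-null sequences are almost supported on the sets $[E]_\delta$, $E$ finite (the Kalton-type statement, available here through Section~\ref{s:Proof}). Passing from $\|\cdot\|$ to the equivalent norm $|||\cdot|||$ affects none of these: the metric of $M$ is untouched, so (iii) is literally the same assertion (and holds a fortiori, since $|||\cdot|||\le\|\cdot\|$); the weak topology of $\Free(M)$ is unchanged, so weakly-null sequences are the same; and (ii) holds by construction. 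Hence the very same argument shows that every weakly-null sequence in $\Free(M)$ is $|||\cdot|||$-null, and since $\|\cdot\|\le C|||\cdot|||$ it is then $\|\cdot\|$-null; thus $\Free(M)$ has the Schur property.

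If one would rather invoke \cite[Proposition 8]{Petitjean} verbatim, one records the renorming on the base space: $\rho(x,y):=|||\delta(x)-\delta(y)|||=\sup\{|f(x)-f(y)| : f\in B_{\mathrm{lip}_0(M)}\}$ is a metric with $\frac1C d\le\rho\le d$, hence $(M,\rho)$ is complete, $\Free(M,\rho)\simeq\Free(M,d)$, and $\mathrm{lip}_0(M,\rho)=\mathrm{lip}_0(M,d)$ now separates the points of $(M,\rho)$ uniformly with constant $1$. By Corollary~\ref{c:CompactReductionSchur} it then suffices to prove the statement for $M$ compact (and $C$-normingness passes to compact subsets by restricting the norming functionals); for compact $M$ the $1$-normingness of $\mathrm{lip}_0(M,\rho)$ for $\Free(M,\rho)$ is exactly Weaver's little-Lipschitz renorming theorem \cite[p.~150]{Weaver2}, whose proof combines the Arzel\`{a}--Ascoli compactness of $B_{\mathrm{lip}_0(M)}$ in $C(M)$ with a Stone--Weierstrass-type density argument. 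Then \cite[Proposition 8]{Petitjean} applies to $\Free(M,\rho)$, and the Schur property transfers back along the isomorphism.

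In either route the one genuinely non-routine point is the same: one needs the uniformly locally flat functions to norm \emph{every} element of the (renormed) free space, not merely the molecules---this is the content of Weaver's trick, respectively of the robustness of the argument in \cite[Proposition 8]{Petitjean}---whereas everything else (checking the metric axioms and the bi-Lipschitz bounds for $\rho$, restricting $C$-normingness to compact subsets, and the invariance of the Schur property under isomorphism) is straightforward.
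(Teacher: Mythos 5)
Your second route is, in substance, the paper's own proof: the paper defines exactly the metric $\rho=\Tilde{d}$, checks $\tfrac1C d\le\Tilde{d}\le d$ and $B_{\lip_0(M,d)}=B_{\lip_0(M,\Tilde{d})}$, observes that $\lip_0(M,\Tilde{d})$ now $1$-separates points of $(M,\Tilde{d})$ uniformly, and transfers the Schur property back along the isomorphism $\F{M,\Tilde{d}}\simeq\F{M}$. The one point you rightly isolate as non-routine --- upgrading ``$1$-separates points uniformly'' to ``$1$-norming for all of $\F{M,\Tilde{d}}$'' --- is settled in the paper in a single line by \cite[Proposition 3.4]{Kalton04}: $\lip_0(M,\Tilde{d})$ is a \emph{sublattice} of $\Lip_0(M,\Tilde{d})$ which $1$-separates points uniformly, hence is $1$-norming, with no compactness hypothesis. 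Your alternative via Corollary~\ref{c:CompactReductionSchur} plus Weaver's theorem for compact spaces should also work ($C$-normingness does restrict to subsets containing the base point), but it is heavier than necessary and strictly requires first passing to the completion, since Corollary~\ref{c:CompactReductionSchur} assumes $M$ complete. Concerning your first route, be careful: the ``tautological'' $1$-normingness of $\lip_0(M)$ for $(\F{M},|||\cdot|||)$ is not literally the hypothesis of \cite[Proposition 8]{Petitjean}, which is stated for the canonical free-space norm of a metric space; to invoke that proposition verbatim you must identify $(\F{M},|||\cdot|||)$ with $\F{M,\Tilde{d}}$, i.e.\ show that the $|||\cdot|||$-dual unit ball is all of $B_{\Lip_0(M,\Tilde{d})}$ and not merely the bipolar of $B_{\lip_0(M)}$ --- and that is again precisely Kalton's Proposition 3.4. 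Re-running Petitjean's argument with equivalent norms is plausible, but you only assert its robustness rather than verify it, so the second route (or the sublattice argument) is the one to keep.
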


\begin{proof}
\ Let us define a new metric $\Tilde{d}$ on $M$ by the following formula
\[\forall x,y \in M, \quad \Tilde{d}(x,y) = \sup_{f\in B_{\mathrm{lip}_0(M)}} \big| \langle f , \delta(x) - \delta(y) \rangle \big| .\]
We first notice that $\Tilde{d}$ is equivalent to $d$. Indeed,
\begin{align*}
\Tilde{d}(x,y) &\leq \sup_{f\in B_{\mathrm{Lip}_0(M)}} | \langle f , \delta(x) - \delta(y) \rangle |  =  d(x,y) \\
&\leq C \sup_{f\in B_{\mathrm{lip}_0(M)}} | \langle f , \delta(x) - \delta(y) \rangle | = C \Tilde{d}(x,y).
\end{align*}
Notice that $\ball{\Lip_0(M,\Tilde{d})}\subset\ball{\Lip_0(M,d)}$ since $\Tilde{d}\leq d$. But moreover $\ball{\lip_0(M,d)}\subset\ball{\lip_0(M,\Tilde{d})}$ by the definition of $\Tilde{d}$, so we actually have $\ball{\lip_0(M,d)}=\ball{\lip_0(M,\Tilde{d})}$. Next, it is clear 
from the definition that for every $x \neq y$ and every $\ep>0$, there exists $f \in B_{\mathrm{lip}_0(M)}$ such that $|f(x) - f(y)| \geq \Tilde{d}(x,y) - \ep$ (we say that $\mathrm{lip}_0(M,\Tilde{d})$ 1--separates points of $(M,\Tilde{d})$ uniformly). Since $\mathrm{lip}_0(M,\Tilde{d})$ is a sublattice of $\Lip_0(M,\Tilde{d})$, \cite[Proposition 3.4]{Kalton04} implies that
$\mathrm{lip}_0(M,\Tilde{d})$ is $1$-norming for $\F{ M,\Tilde{d}}$.
According to \cite[Proposition 8]{Petitjean}, $\F{ M,\Tilde{d}}$ has the Schur property. The conclusion follows from the fact that $\F{ M,\Tilde{d}}$ and $\F M$ are isomorphic.
\end{proof}

It is quite natural to ask whether this last condition is equivalent to the Schur property. Not very surprisingly, it is not. Our first example shows that it might happen that $\mathrm{lip}_0(M)$ does not even separate the points of $\Free(M)$ while the latter space has the Schur property. The second example shows that assuming that $M$ is topologically discrete would not help either.

\begin{example} \label{ExampleSchur}
There exists a countable complete metric space $M$ such that $\mathrm{lip}_0(M)$ does not separate points of $\F M$ and such that $\F M$ has the Schur property. 
\end{example}
\begin{proof}
\ We first define a metric graph structure as follows:
$$M = \{0,q\}\cup \{x_i^n \; : \; n \in \mathbb N, 1 \leq i \leq n \},$$ with edges $(0,x_1^n)$, $(x_i^n , x_{i+1}^n)$ and $(x_n^n,q)$. 
The metric $d$ is defined on the edges by 
\[d(0,x_1^n) = d(x_i^n , x_{i+1}^n) = d(x_n^n,q) = \dfrac{1}{n+1}, \]
and then extended as the shortest path distance along the edges. 
Note that $d(0,q) = 1$ and that $0$ and $q$ are the only accumulation points of $M$.

The fact that $\F M$ has the Schur property follows from Corollary~\ref{c:FCountableCompletIsSchur} as $M$ is countable and complete. Let us now check that $\mathrm{lip}_0(M)$ does not separate points of $\F M$. 
Indeed, aiming at a contradiction, assume that there exists $f \in \mathrm{lip}_0(M) $ such that $f(q) = 1$. 
For every $\varepsilon >0$, there is $\delta(\varepsilon) >0$ such that 
\[ \forall x,y \in M , \quad d(x,y) \leq \delta(\varepsilon) \implies |f(x) - f(y) | \leq \varepsilon d(x,y). \]
Let $n$ be such that $\frac{1}{n+1} < \delta(\frac{1}{2})$. 
Then we have for every $1\leq i \leq n$  that
$$|f(x_i^n) - f(x_{i+1}^n)| \leq \frac{1}{2}\, \dfrac{1}{n+1}.$$  
Thus 
\[ f(q) \leq |f(q) - f(x_n^n) | + \sum_{i=1}^{n-1} |f(x_i^n) - f(x_{i+1}^n)| + |f(x_1^n) - f(0) |\leq  \dfrac{1}{2},\]
which is clearly a contradiction.
\end{proof}

The following example is similar.
\begin{example}
There exists a countable, topologically discrete, complete metric space $M$ such that $\mathrm{lip}_0(M)$ is not norming for $\F M$ and such that $\F M$ has the Schur property. 
\end{example}
\begin{proof}
\ The metric space $M$ is defined as the disjoint union, at a suitably large distance from each other, of metric graphs $M_n$ where $n \in \mathbb N$ and $M_n$ is defined by 
$$M_n = \{p_n,q_n\}\cup \{x_i^N \; : \; N \in \mathbb N \setminus \{1\}, 1 \leq i \leq N \},$$ with edges $(p_n,x_1^N)$, $(x_i^N , x_{i+1}^N)$ and $(x_N^N,q_n)$. The metric $d$ is defined on the edges by
\[d(p_n,x_1^N) = d(x_N^N,q_n) = \dfrac{1}{n}, \quad  d(x_i^N , x_{i+1}^N) = \dfrac{1}{N-1}, \]
and then extended on each $M_n$ as the shortest path distance along the edges. 
Note that $d(p_n,q_n) = \frac{2}{n}+1$. Let us fix $p_1$ to be the base point of $M$. 

The fact that $\Free(M)$ has the Schur property follows again directly from Corollary~\ref{c:FCountableCompletIsSchur}.
Let us see that $\lip_0(M)$ cannot be $C$-norming for any $C<\infty$. Indeed, fix $C$, choose $n>2C$ and suppose that $f\in\lip_0(M)$ is such that $f(q_n)-f(p_n)=d(p_n,q_n)=1+\frac{2}{n}$. Fix $\varepsilon>0$, then there is $\delta>0$ such that $\abs{f(x)-f(y)}\leq\varepsilon d(x,y)$ whenever $d(x,y)\leq\delta$. Take $N$ such that $\frac{1}{N-1}<\delta$. Then we have
\begin{align*}
f(q_n)-f(p_n) &\leq \abs{f(q_n)-f(x_N^N)} + \sum_{i=1}^{N-1}\abs{f(x_{i+1}^N)-f(x_i^N)} + \abs{f(x_1^N)-f(p_n)} \\
&\leq \frac{1}{n}\lipnorm{f} + (N-1)\cdot\frac{\varepsilon}{N-1} + \frac{1}{n}\lipnorm{f} = \frac{2}{n}\lipnorm{f}+\varepsilon .
\end{align*}
Since $\varepsilon>0$ was arbitrary, we get $1+\frac{2}{n}\leq\frac{2}{n}\lipnorm{f}$ and hence $\lipnorm{f}\geq 1+\frac{n}{2}>C$. Thus $\lip_0(M)$ is not $C$-norming.
\end{proof}

\subsection*{Acknowledgements}

This work was supported by the French ``Investissements d'Avenir'' program, project ISITE-BFC (contract ANR-15-IDEX-03). R. J. Aliaga was partially supported by the Spanish Ministry of Economy, Industry and Competitiveness under Grant MTM2017-83262-C2-2-P.

\end{document}